\newtheorem{thm}{Theorem}[section]
\newtheorem{lem}[thm]{Lemma}
\newtheorem{cor}[thm]{Corollary}
\theoremstyle{definition}
\newtheorem{rem}[thm]{Remark}
\newtheorem{defin}[thm]{Definition}
\newtheorem{ex}[thm]{Example}
\begin{document}
\begin{frontmatter}

\title{Perfect stationary solutions of reaction-diffusion equations on lattices and regular graphs}

\author[inst1]{Vladim\'{i}r \v{S}v\'{i}gler\corref{cor1}}
\ead{sviglerv@kma.zcu.cz}
\cortext[cor1]{Corresponding author}

\affiliation[inst1]{organization={Department of Mathematics \& NTIS, Faculty of Applied Sciences, University of West Bohemia},
            addressline={Univerzitni~8}, 
            city={Pilsen},
            postcode={30100}, 
            state={Czech Republic}
            }

\author[inst1]{Jon\'{a}\v{s} Volek}
\ead{volek1@kma.zcu.cz}

\begin{abstract}
Reaction-diffusion equations on infinite graphs can have an infinite number of stationary solutions. These solutions are generally described as roots of a countable system of algebraic equations. As a generalization of periodic stationary solutions we present perfect stationary solutions, a special class of solutions with finite range in which the neighborhood values are determined precisely by the value of the central vertex. The focus on the solutions which attain a finite number of values enables us to rewrite the countable algebraic system to a finite one.  In this work, we define the notion of perfect stationary solutions and show its elementary properties. We further present results from the theory of perfect colorings in order to prove the existence of the solutions in the square, triangular and hexagonal grids; as a byproduct, the existence of uncountable number of two-valued stationary solutions on these grids is shown. These two-valued solutions can form highly aperiodic and highly irregular patterns. Finally, an application to a bistable reaction-diffusion equation on a square grid is presented. 
\end{abstract}

\begin{keyword}
regular grids \sep lattice differential equations \sep graph differential equations \sep stationary solutions \sep perfect colorings
\MSC 05C15 \sep 34A33 \sep 34B45 \sep 39A12 \sep 34B45
\end{keyword}

\end{frontmatter}

\section{Introduction}
We study a specific class of stationary solutions of graph and lattice reaction-diffusion differential equations with finite range. Let $G = (V,E)$ be generally an undirected graph with possibly infinite countable set of vertices $|V| \leq \infty$ and a set of edges $E \subset \genfrac(){0pt}{2}{V}{2}$, we consider the following (possibly countable) system of graph (reaction-diffusion) differential equations (GDEs):
\begin{equation}\label{e:GDE}
\dot{u}_i(t) = d \sum_{j \in N(i)}(u_j(t)-u_i(t))+f(u_i(t)), \quad i \in V, \quad t>0,
\end{equation}
in which $ d>0 $ is the diffusion parameter, $ N(i) := \{ j \in V \, |  \, \{ i,j\} \in E \} $ is the neighborhood of vertex $ i \in V $, and $ f: \mathbb{R} \to \mathbb{R} $ is a given continuous reaction function. 

This manuscript mainly treats a stationary problem for~\eqref{e:GDE}, i.e., the following (possibly countable) system of algebraic equations:
\begin{equation}\label{e:GDE-stationary}
	0 =	d \sum_{j \in N(i)}(u_j-u_i)+f(u_i), \quad i \in V.
\end{equation}
Given a solution $u: V \to \mathbb{R}$ of~\eqref{e:GDE-stationary}, we denote the set of its attained values by $\mathrm{Im}(u)$. We study solutions of~\eqref{e:GDE-stationary} with $ | \mathrm{Im}(u) | < \infty $ which possess additionally a specific regularity. Therefore, we necessarily restrict ourselves to $k$-regular graphs $G_k$, $ k \in \mathbb{N} $, in which $\text{deg}(i):= |N(i)|=k$ for every $i \in V$.

Let us note that the general setting of~\eqref{e:GDE} admits to consider graphs $ G $ being: finite graphs and study corresponding finite GDEs; regular infinite lattices and study corresponding lattice differential equations (LDEs), regular grids, or completely irregular infinite graphs. The equation~\eqref{e:GDE} on finite graphs is for example used to model patchy spatial environments~\cite{slavikLotkaVolterraCompetition2020, stehlikExponentialNumberStationary2017}. The equation~\eqref{e:GDE} on a lattice is a model used in crystallography, or describing dynamics of myelinated neurons and others, for overview see~\cite{hupkesTravelingWavesPattern2020}.

The stationary solutions of graph and lattice differential equations are in various ways significant to dynamical phenomena such as traveling waves of lattice differential equations. In a seminal paper~\cite{keenerPropagationItsFailure1987} Keener showed that there is an uncountable number of asymptotically stable stationary solutions of bistable lattice differential equation in a weak diffusion regime. These solutions are thought to prevent monotone traveling waves from moving. This phenomenon is nowadays known as \textit{pinning}, i.e., the existence of a nontrivial parameter region in which waves do not propagate, cf. continuous case for PDE's~\cite{fifeApproachSolutionsNonlinear1977}. The pinning phenomenon was examined on infinite paths~\cite{hoffmanUniversalityCrystallographicPinning2010}, multidimensional grids~\cite{hoffmanMultidimensionalStabilityWaves2015}, and infinite trees~\cite{hupkesPropagationReversalBistable2023, kouvarisTravelingPinnedFronts2012}.

Additional striking connection and motivation for this manuscript are multichromatic (nonmonotone) traveling waves of lattice differential equations. A profile of a wave often studied in bistable equations can be viewed as a solution connecting two stable homogeneous stationary solutions of the equation. It was revealed in~\cite{hupkesMultichromaticTravellingWaves2019} that bistable equations in fact admit traveling waves whose profile connects spatially periodic stationary solutions. The need for deeper understanding of the periodic solutions resulted in the discovery of their natural ordering~\cite{hupkesCountingOrderingPeriodic2019} and later to exact formulas for numbers of distinct solutions~\cite{sviglerBistableLatticeEquations2021}. 
The attempts to study periodic stationary solutions on multidimensional grids were not fruitful mainly due to an increment of parameters defining the periodicity (see Definition~\ref{d:square}), additionally the variance in the structure of the used grids (e.g., square, triangular, or hexagonal grids) leads to a limited portability of the results. 

The specific class of stationary solutions possessing some regularity examined in this paper are the so called \emph{perfect stationary solutions}. The term originates from their natural relationship with \emph{perfect colorings} of graphs (Definition~\ref{d:perfect-coloring}). The perfect stationary solutions preserve the crucial property of periodic solutions -- the solutions are roots of finite systems of algebraic equations. Moreover, every periodic stationary solution of a regular grid is a perfect stationary solution. However, perfect stationary solutions encompass also additional irregular solutions, see Figure~\ref{f:applications} for illustration. Generally speaking, focusing on the local properties of the stationary solutions instead of the global patterns leads to greater freedom. 

Specifically, a perfect stationary solution $ u: V \to \mathbb{R} $ of~\eqref{e:GDE} satisfies that the value (color) of the solution at a vertex uniquely determines the occurrence of other values (colors) in its neighborhood. It is crucial to emphasize that only a set of neighboring values (colors) is specified, not its precise structure. There are benefits of this approach which come at a cost. The benefits are: 
\begin{itemize}
\item Each periodic stationary solution on a regular grid is a perfect one. This follows from the fact that any periodic solution forms a regular tiling, see Theorem~\ref{t:periodic-is-perfect-sol}. 
\item The framework enables us to split some problems into simpler ones via merging of the colors, i.e., requiring certain solution values to be identical, see Example~\ref{ex:periodic-is-perfect} and Section~\ref{s:application}.
\item We can for example show that there is an uncountable number of two-value stationary solution for bistable dynamics on square, triangular or hexagonal grid, Theorems~\ref{t:square:grid},~\ref{t:triangular:grid}, and~\ref{t:hexagonal:grid}. One can compare this to a classical result of Keener \cite{keenerPropagationItsFailure1987} showing an uncountable number of stationary solutions on path whose range is not a finite set in general. 
\end{itemize}
The cost is that the proof of existence of perfect solutions requires to employ results from the theory of perfect colorings of graphs -- in addition to the classical problem of existence of a solution to a system of algebraic equations. Indeed, the existence of a perfect coloring answers the question whether a certain solution profile can be mapped to a given graph. 

We provide an extended overview of the structure of the paper in order to make orientation in this paper easier.  

\textbf{Section 2} serves to introduce preliminaries necessary for further exposition. We start with the colorings of regular graphs and follow with the notion of multisets. The section is concluded by the introduction of perfect colorings of regular graphs. 

In \textbf{Section 3}, we define a perfect stationary solution (Definition~\ref{d:perfect-solution}) together with a claim showing the connection between infinite- and finite dimensional system describing the perfect stationary solution (Theorem~\ref{t:main}). The subtle boundary between the perfect colorings (i.e., a graph-theoretical notion) and the perfect stationary solutions (i.e., a specific stationary solution of a dynamical system) is further examined. Namely, we show that every two-valued stationary solution of a GDE or a stationary solution of a GDE on a finite graph is necessarily perfect stationary solutions. We subsequently provide a counter-example showing the existence of a three-valued stationary solution which is not perfect. Technical lemmas which comment on a construction of new perfect stationary colorings and solutions from existing ones via merging of colors conclude this part of the manuscript.

Up to this point the only assumptions posed on the underlying graph were the regularity and the cardinality of the vertex set. In contrast, \textbf{Section 4} focuses on the existence and properties of perfect stationary colorings on lattices, namely: infinite path graph, square grid, triangular grid, and hexagonal grid. The situation is somewhat simple on an infinite path graph since every perfect stationary solution is a periodic one (Lemma~\ref{l:path}) and the existence of the coloring can be easily checked. However, considering two-dimensional grids opens completely new behavior. We define the periodic coloring on a grid and show that every periodic coloring can be handled as a perfect one (Theorem~\ref{t:periodic-is-perfect}) which is then readily developed into Theorem~\ref{t:periodic-is-perfect-sol} stating that each periodic stationary solution on any of these grids is also perfect. For each of the mentioned grids, we summarize results relating the spectral properties of the matrix $\mathbf{m}$ describing the neighborhood color sets and the actual existence of perfect colorings, i.e., the existence of a proper layout of the colors. Particular constructions of two-color perfect color further reveal uncountable number of possible layouts which cannot be transformed onto each other via graph automorphisms. 

The results of the previous sections are combined in \textbf{Section 5} in an example treating four-valued perfect stationary solutions of a Nagumo bistable GDE on a square grid. 

We provide concluding remarks and possible future directions in \textbf{Section 6}.

\section{Preliminaries}\label{s:prelim}

In this preliminary section we introduce needed definitions and statements on graph colorings and mainly, on a specific class of colorings of regular graphs -- the so-called perfect colorings.
	 
\paragraph{Colorings of regular graphs, mergers}

Let $ k \in \mathbb{N} $ and $ G_{k} = (V, E) $ be a $k$-regular graph. A coloring of $G_k$ is a map: $\Gamma: V \to C$ from the set of vertices $ V $ to a finite set of colors $ C := \{ 1, 2, \ldots, n \}  $. Given two colorings of a graph, they can possess a similar structure obtained by merging of colors. This is formalized in the next definition.
\begin{defin}\label{d:split-and-merger}
   Let $ k \in \mathbb{N} $, a $k$-regular graph $G_k$ be given and let $\Gamma_1$, $\Gamma_2$ be two colorings of $G_k$ with their respective color sets $C_1 \subsetneq C_2$ such that $\ell = |C_1| < |C_2| = n$.
   We say that $\Gamma_1$ is a \textit{merger} of $\Gamma_2$ if there exists a surjective map $\phi:C_2 \to C_1$ such that 
   \[
   \Gamma_1(i) = \phi(\Gamma_2(i)) \quad \text{for every} \quad  i \in V.
   \]
\end{defin}

\paragraph{Multisets}
Besides merely keeping track of adjacent colors, we also need information on their respective quantities. In order to handle this task, we introduce the following multiset formalism as in~\cite{syropoulosMathematicsMultisets2001}. The multiset is a tuple $M := (A,m)$ in which $A$ is an underlying set and $m: A \to \mathbb{N}_0$ is a mapping denoting the multiplicities of the elements of $A$. When there is no room for confusion we will freely represent the mapping $m$ as a vector $ m \in \mathbb{N}_0^{|A|}$ provided the ordering of the set $A$ is obvious. In case we want to enumerate a multiset we use square brackets. For example, $M = (\{ a, b, c\}, (a \mapsto 2, b \mapsto 0,c \mapsto 1)) = [ a,a,c]$. If we consider the ordering $a<b<c$, then one has $ m = (2,0,1) \in \mathbb{N}_0^{3} $.

\begin{rem}
A set $ A $ is a special case of multiset with $m := \chi_A$ being an indicator function. 
\end{rem}

\paragraph{Perfect colorings}

The focal class of colorings are perfect colorings in which the colors in the neighborhood of each vertex are precisely specified by its color. This can be represented by a matrix in the following manner. Let $ G_{k} $ be a $k$-regular graph, $ C = \{ 1,2, \ldots , n \} $ be a set of colors, and
\[
m_{i} = ( (m_{i})_{1}, (m_{i})_{2}, \ldots, (m_{i})_{n} ) \in \mathbb{N}_{0}^{n}
\]
be the vector form of multiset representing the color structure of the neighborhood of vertices with color $ i \in C $, i.e., $ m_{i} $ contains as $ (m_{i})_{j} $ the number of neighbors with color $ j \in C $ of the vertex with color $ i \in C $, then the structure of the perfect coloring is represented by the coloring matrix
\[
\mathbf{m} = \left[
\begin{array}{ccc}
&m_{1}& \\
\hline
&m_{2}& \\
\hline
&\vdots& \\
\hline
&m_{n}&
\end{array}
\right].
\]
We simplify the notation to be consistent with the indexing of matrices as $ (m_{i})_{j}  = m_{ij}$, $ i,j=1,2,\ldots,n $. There has to necessarily be
\[
\sum_{j=1}^n m_{ij} = k \quad \text{for every} \quad i = 1, 2, \ldots, n,
\]
since each vertex has exactly $k$ neighbors in a $ k $-regular graph $ G_{k} $.

The following definition then formalizes the notion of perfect coloring.
	
	\begin{defin}\label{d:perfect-coloring}
		Let $n,k \in \mathbb{N}$, $C = \{1,2, \ldots, n\}$, $G_k$ be a $k$-regular graph, and $ \mathbf{m} \in \mathbb{N}_{0}^{n \times n} $ be such that $ \sum\nolimits_{j=1}^{n} m_{ij} = k $ holds for all $ i = 1,2,\ldots, n $. 
        Then the coloring $\Gamma$ of $G_k$ is called \emph{$\mathbf{m}$-perfect} if
		\[
			[\Gamma(j)]_{j \in N(i)} = (C,m_{\Gamma(i)\cdot}), \quad \text{for every} \quad i \in V,
		\]
		in which $m_{\Gamma(i)\cdot}$ is $\Gamma(i)$-th row of the matrix $\mathbf{m} \in \mathbb{N}_{0}^{n \times n}$. 
	\end{defin}

	\begin{ex}\label{ex:path}
		Let $G_2$ be an infinite path with vertices $V = \mathbb{Z}$. Then the coloring 
		\begin{equation}\label{ex:path:gamma}
			\Gamma(i)=\begin{cases}
				1, & i \; \mathrm{mod} \; 3 = 0, \\
				2, & i \; \mathrm{otherwise},
			\end{cases}
        \end{equation}
		depicted in Figure~\ref{f:ex:path} is $\mathbf{m}$-perfect with 
		\begin{equation}\label{ex:path:m}
			\mathbf{m} = \left[
			\begin{array}{cc} 
				0 & 2 \\
				1 & 1
			\end{array}
			\right],
		\end{equation}
		in which the colors $ C = \{1,2\} $ are white and black, respectively. Each white vertex has exactly two black neighbors and each black vertex has one black and one white neighbor. Note that the entries in both rows $m_{11}+m_{12} = m_{21} + m_{22}=2$ add up to $2$, since the path graph is $2$-regular.
				
		\begin{figure}			
			\centering
            \includegraphics[width=.6\textwidth]{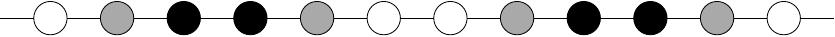}
			\caption{Example of a perfect coloring of an infinite path in Example~\ref{ex:path}.}
			\label{f:ex:path}
		\end{figure}
	\end{ex}

The following lemma describes the necessary sign structure of a matrix $\mathbf{m}$ describing an $\mathbf{m}$-perfect coloring of a graph. 

\begin{lem}\label{l:symmetry}
    Let $ n,k \in \mathbb{N} $, $ G_{k} $ be a $k$-regular graph, and $\mathbf{m} \in \mathbb{N}_{0}^{n \times n}$ be such that $\sum_{j=1}^n m_{ij}=k$ holds for all $i = 1,2, \ldots, n$. If there exists an $\mathbf{m}$-perfect coloring of $ G_{k} $, then the sign matrix of $\mathbf{m}$ is symmetric. 
\end{lem}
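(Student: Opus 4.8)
The plan is to unwind what sign symmetry means and then exploit the symmetry of the adjacency relation. Since every entry of $\mathbf{m}$ is a nonnegative integer, the sign matrix has entries in $\{0,1\}$, and its symmetry is exactly the assertion that $m_{ij}>0 \iff m_{ji}>0$ for all $i,j \in C$. The diagonal case $i=j$ is automatic, so only the off-diagonal equivalence requires proof; moreover, establishing the single implication ``$m_{ij}>0 \Rightarrow m_{ji}>0$'' for every ordered pair already yields both directions, because applying it with $i$ and $j$ interchanged gives the converse. Throughout I would assume that each color in $C$ is actually attained by $\Gamma$; otherwise a row corresponding to an unused color is unconstrained by Definition~\ref{d:perfect-coloring}, and the claim must be read for the subset of colors that occur (equivalently, one restricts $\mathbf{m}$ to the used colors).

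First I would fix colors $i \neq j$ with $m_{ij}>0$ and pick any vertex $v \in V$ with $\Gamma(v)=i$, which exists by the standing assumption. By the defining property, the multiset of colors on $N(v)$ equals $(C,m_{i\cdot})$, and since $m_{ij}\geq 1$, at least one neighbor $w \in N(v)$ satisfies $\Gamma(w)=j$. Next I turn the edge around: because $G_k$ is undirected, $v \in N(w)$, so $w$ has a neighbor, namely $v$, of color $i$. Applying Definition~\ref{d:perfect-coloring} to $w$, whose neighborhood multiset is $(C,m_{j\cdot})$, the number of color-$i$ neighbors of $w$ is precisely $m_{ji}$, and this count is at least one. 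Hence $m_{ji}\geq 1>0$, which is the desired implication.

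This argument is purely local and therefore applies verbatim whether $V$ is finite or countably infinite; this is the point I expect to require the most care, since the more familiar route --- double counting the edges between the color classes $V_i := \Gamma^{-1}(i)$ and $V_j$ to obtain $|V_i|\,m_{ij} = |V_j|\,m_{ji}$ --- is only meaningful when the classes are finite and breaks down on the infinite lattices considered later. For finite $G_k$ this identity gives a quantitative refinement (the class sizes are inversely proportional to the off-diagonal entries) and immediately implies sign symmetry, but on infinite graphs I would rely solely on the neighbor-existence argument above. The only genuine subtlety is the one already flagged, namely ensuring that color $i$ is used so that a starting vertex $v$ exists; once that is granted, the conclusion is a one-line consequence of the undirectedness of $E$.
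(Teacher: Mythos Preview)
Your argument is correct and follows the same route as the paper: pick a vertex of color $i$, use $m_{ij}>0$ to find a neighbor of color $j$, and then undirectedness of the edge forces $m_{ji}>0$. Your added caveat that each color must actually be attained, and your remark that the double-counting identity $|V_i|\,m_{ij}=|V_j|\,m_{ji}$ is unavailable on infinite grids, are worthwhile observations that the paper leaves implicit.
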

\begin{proof}
    The matrix $\mathbf{m}$ has either zero or positive entries. Consider $i,j=1, \ldots, n$, $i \neq j$. If $m_{ij} > 0$, the existence of $\mathbf{m}$-perfect coloring yields that a vertex of color $i$ has $m_{ij} > 0$ neighbors of color $j$. Since $ G_{k} $ is undirected, a vertex of color $j$ must have at least one neighbor of color $i$, which concludes $m_{ji} > 0 $, i.e., $ \mathrm{sign}(m_{ij}) = \mathrm{sign}(m_{ji}) $.
\end{proof}

We conclude this section with a technical lemma which states that given a perfect coloring, there may be a simpler one obtained via merging of colors. Further, it gives a formula on how to construct the matrix of the simpler perfect coloring. See Lemma~\ref{l:ss-merger} for specific use.
\begin{lem}\label{l:perfect-merger}
    Let $ k \in \mathbb{N} $, a $k$-regular graph $G_k$ be given and let $\Gamma_1$, $\Gamma_2$ be two colorings of $G_k$ with their respective color sets $C_1 \subsetneq C_2$ such that $\ell = |C_1| < |C_2| = n$. Let $\Gamma_2$ be perfect with the matrix $\mathbf{m}_2 \in \mathbb{N}_0^{n \times n}$. If $\Gamma_1$ is perfect and it is a merger of $\Gamma_2$ with a merging surjective map $\phi:C_2 \to C_1$ satisfying $\varphi(i) = i$, $i= 1, \ldots, \ell$, then $\Gamma_1$ is $\mathbf{m}_1$-perfect with $\mathbf{m}_1 \in \mathbb{N}_0^{\ell \times \ell}$ given by
    \begin{equation}\label{e:perfect-merger}
        (\mathbf{m}_1)_{ij} = \sum_{s \in \phi_{-1}(j)} (\mathbf{m}_2)_{is},  \quad i,j \in C_1.
    \end{equation}
\end{lem}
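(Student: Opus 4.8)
The plan is to reduce the claim to a direct counting argument at a single conveniently chosen vertex, exploiting the perfectness of both colorings. Since $\Gamma_1$ is assumed perfect, it is $\mathbf{m}_1$-perfect for some matrix $\mathbf{m}_1 \in \mathbb{N}_0^{\ell \times \ell}$, so the entire content of the lemma is to identify its entries. The key observation, valid at every vertex $v \in V$, is the disjoint decomposition of the neighborhood according to the $\Gamma_2$-colors lying above a fixed $\Gamma_1$-color: for each $j \in C_1$,
\[
\{\, w \in N(i) : \Gamma_1(w) = j \,\} = \bigsqcup_{s \in \phi^{-1}(j)} \{\, w \in N(i) : \Gamma_2(w) = s \,\},
\]
where $\phi^{-1}(j)$ denotes the preimage $\phi_{-1}(j)$. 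This is immediate from $\Gamma_1 = \phi \circ \Gamma_2$ together with the fact that the fibres $\phi^{-1}(j)$ are pairwise disjoint. Counting cardinalities yields, for every such $v$,
\[
\#\{ w \in N(i) : \Gamma_1(w) = j \} = \sum_{s \in \phi^{-1}(j)} \#\{ w \in N(i) : \Gamma_2(w) = s \}.
\]

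Next I would fix an index $i \in C_1$ and choose a vertex $v$ with $\Gamma_2(v) = i$. Such a vertex exists because $\Gamma_2$ attains every color of its color set $C_2 \supseteq C_1$, and since $\phi(i) = i$ by hypothesis we also obtain $\Gamma_1(v) = \phi(\Gamma_2(v)) = i$. Now perfectness of $\Gamma_2$ evaluates the right-hand counts as $\#\{ w \in N(i) : \Gamma_2(w) = s\} = (\mathbf{m}_2)_{is}$, while perfectness of $\Gamma_1$ together with $\Gamma_1(v) = i$ identifies the left-hand count as the corresponding entry of its defining matrix, $\#\{ w \in N(i) : \Gamma_1(w) = j \} = (\mathbf{m}_1)_{ij}$. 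Substituting into the displayed identity gives exactly $(\mathbf{m}_1)_{ij} = \sum_{s \in \phi^{-1}(j)} (\mathbf{m}_2)_{is}$. The row-sum condition then follows for free, since the fibres over $j \in C_1$ partition $C_2$, whence $\sum_{j} (\mathbf{m}_1)_{ij} = \sum_{s \in C_2} (\mathbf{m}_2)_{is} = k$, confirming that $\mathbf{m}_1 \in \mathbb{N}_0^{\ell \times \ell}$ is a legitimate coloring matrix.

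The step I expect to carry the real weight is the legitimacy of selecting the representative vertex with $\Gamma_2(v) = i$ rather than an arbitrary vertex of $\Gamma_1$-color $i$. An arbitrary such vertex satisfies only $\Gamma_2(v) \in \phi^{-1}(i)$, which would produce row $\Gamma_2(v)$ of $\mathbf{m}_2$ instead of the intended row $i$. It is precisely the two hypotheses -- that $\Gamma_1$ is perfect (so the left-hand count is independent of which vertex of color $i$ is chosen) and that $\phi$ fixes $C_1$ pointwise (so the color $i$ itself belongs to $\phi^{-1}(i)$ and is realized by $\Gamma_2$) -- that allow me to pin down this convenient representative and extract the clean formula. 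I would therefore make the surjectivity of $\Gamma_2$ onto $C_2$ explicit, as it is exactly what guarantees the existence of a vertex with $\Gamma_2(v) = i$.
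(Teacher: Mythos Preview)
Your proof is correct and follows essentially the same approach as the paper. The paper's own proof is a single sentence stating that the formula is immediate because the entries of $\mathbf{m}_1$ are simply the total counts of the colors merged together; you have spelled out this counting argument in full detail, including the role of the hypothesis $\phi(i)=i$ in selecting a convenient representative vertex.
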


\begin{proof}
The proof of~\eqref{e:perfect-merger} is immediate, since the entries in the new matrix $\mathbf{m}_1$ are just total numbers of the colors merged together.
\end{proof}

\section{Perfect stationary solutions and their properties}\label{s:perfect}

Here, we firstly introduce the notion of perfect stationary solutions of~\eqref{e:GDE} via the perfect colorings of an underlying graph presented in the previous Section~\ref{s:prelim}. We then state a theorem on the existence of perfect stationary solutions and discuss their relation to corresponding colorings.

\begin{defin}\label{d:perfect-solution}
Let $n,k \in \mathbb{N}$, $G_k$ be a $k$-regular graph, and let $\mathbf{m} := (m_{ij}) \in \mathbb{N}_{0}^{n \times n}$ be a matrix such that $\sum_{j=1}^n m_{ij}=k$ holds for all $i = 1, \ldots, n$. A stationary solution $u$ of~\eqref{e:GDE} is called an \emph{$\mathbf{m}$-perfect stationary solution} provided there exists an $\mathbf{m}$-perfect coloring $\Gamma$ of the graph $G_k$ and a surjection $ v: C \to \mathrm{Im}(u) $ such that
\[
v_{\Gamma(i)} = u_{i} \quad \text{for every} \quad i \in V .
\]
\end{defin}

The following theorem describes a way how to construct the mapping $ v: C \to \mathrm{Im}(u) $ from Definition~\ref{d:perfect-solution} by a solution of a specific finite system of algebraic equations which employs the coloring matrix $ \mathbf{m} $.

\begin{thm}\label{t:main}
    Let $n,k \in \mathbb{N}$, $G_k$ be a $k$-regular graph, and let $\mathbf{m} := (m_{ij}) \in \mathbb{N}_{0}^{n \times n}$ be such that $\sum_{j=1}^n m_{ij}=k$ holds for all $i = 1,2, \ldots, n$. Let $v \in \mathbb{R}^n$ be a solution of the equation 
    \begin{equation}\label{e:finite-stationary}
        0 = d(\mathbf{m} \, \cdot \, v - k v) + F(v).
    \end{equation}
    If there exists an $\mathbf{m}$-perfect coloring $\Gamma$ of the graph $G_k$, then there exists an $ \mathbf{m} $-perfect stationary solution $u: V \to \mathbb{R} $ of~\eqref{e:GDE} given by
    \begin{equation}\label{e:finite-surjection}
    v_{\Gamma(i)} = u_{i}, \quad i \in V .
    \end{equation}
    Moreover, the vector $v$ is an asymptotically stable stationary solution of the system 
    \begin{equation}\label{e:finite-dynamic}
        \dot{v}(t) = d(\mathbf{m} \, \cdot \, v(t) - k v(t)) + F(v(t))
    \end{equation}
    if and only if $ u $ is an asymptotically stable stationary solution of~\eqref{e:GDE} in the $\ell^\infty$-norm. 
\end{thm}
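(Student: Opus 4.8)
The plan is to separate the statement into the stationarity claim and the stability equivalence, and to exploit throughout the isometric lift $L\colon(\mathbb{R}^n,\|\cdot\|_\infty)\to(\ell^\infty(V),\|\cdot\|_\infty)$ given by $(Lw)_i=w_{\Gamma(i)}$, whose image $S$ is the subspace of \emph{color-constant} configurations. First I would establish stationarity of $u=Lv$. Fix $i\in V$ and put $c=\Gamma(i)$. Since $\Gamma$ is $\mathbf{m}$-perfect, the neighbor colors form the multiset $(C,m_{c\cdot})$, so $\sum_{j\in N(i)}u_j=\sum_{s=1}^n m_{cs}v_s=(\mathbf{m}v)_c$ while $\sum_{j\in N(i)}u_i=kv_c$. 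Hence the $i$-th residual of~\eqref{e:GDE-stationary} equals $d((\mathbf{m}v)_c-kv_c)+f(v_c)$, which is the $c$-th coordinate of the left-hand side of~\eqref{e:finite-stationary} and therefore vanishes; thus $u$ solves~\eqref{e:GDE-stationary} and is $\mathbf{m}$-perfect by Definition~\ref{d:perfect-solution}. The same computation shows that $S$ is \emph{invariant} under the nonlinear flow of~\eqref{e:GDE}: if $u_i=\tilde v_{\Gamma(i)}$ then $\dot u_i$ depends only on $\Gamma(i)$, and the induced dynamics of $\tilde v$ is precisely~\eqref{e:finite-dynamic}. Since $L$ is an $\ell^\infty$-isometry, \eqref{e:finite-dynamic} is conjugate to the restriction of~\eqref{e:GDE} to $S$, which immediately yields the ``only if'' direction: asymptotic stability of $u$ in $\ell^\infty$ in particular controls the color-constant perturbations that stay in $S$, and $L$ transports this to asymptotic stability of $v$.

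For the ``if'' direction I would linearize. Writing $b_c:=f'(v_c)$ and $A:=d(\mathbf{m}-kI)+\mathrm{diag}(b_1,\dots,b_n)$, the linearization of~\eqref{e:GDE} at $u$ is the bounded operator $(\mathcal{L}\eta)_i=d\sum_{j\in N(i)}(\eta_j-\eta_i)+b_{\Gamma(i)}\eta_i$ on $\ell^\infty(V)$, and the multiset identity above gives the intertwining $\mathcal{L}L=LA$. The crucial structural observation is that $A$ is a \emph{Metzler} matrix, since its off-diagonal entries $A_{ij}=d\,m_{ij}\ge 0$. If $v$ is (hyperbolically) asymptotically stable, then $A$ is Hurwitz, so its spectral abscissa --- a \emph{real} eigenvalue for a Metzler matrix --- is negative; equivalently $-A$ is a nonsingular M-matrix and there exists $p\gg 0$ with $Ap\le-\mu p$ for some $\mu>0$. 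Lifting, $P:=Lp$ satisfies $0<\min_c p_c\le P_i\le\max_c p_c$ and, by the intertwining, $\mathcal{L}P=L(Ap)\le-\mu P$, so $P$ is a strictly positive Lyapunov weight for $\mathcal{L}$.

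I would close with a weighted maximum principle. For a solution $\dot\eta=\mathcal{L}\eta$ set $z_i:=e^{\mu t}\eta_i/P_i$; using $\mathcal{L}P\le-\mu P$ one obtains
\[
\dot z_i\le \frac{d}{P_i}\sum_{j\in N(i)}P_j\,(z_j-z_i),
\]
so $z$ is a subsolution of a weighted graph heat equation with bounded, cooperative coefficients. Consequently $\sup_i z_i(t)$ is non-increasing and $\inf_i z_i(t)$ non-decreasing, which yields $\|\eta(t)\|_\infty\le(\max_c p_c/\min_c p_c)\,e^{-\mu t}\|\eta(0)\|_\infty$, i.e. $e^{t\mathcal{L}}$ is exponentially stable on $\ell^\infty(V)$. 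The principle of linearized stability in the Banach space $\ell^\infty(V)$ --- applicable because $f\in C^1$ renders the right-hand side of~\eqref{e:GDE} a $C^1$ map with derivative $\mathcal{L}$ --- then upgrades this to nonlinear asymptotic stability of $u$.

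I expect the main obstacle to be exactly this ``if'' direction: $\mathcal{L}$ acts on the infinite-dimensional space $\ell^\infty(V)$, where its spectrum is genuinely larger than that of the finite matrix $A$, so stability cannot be read off from the eigenvalues of $A$ alone. The resolution is the Metzler structure, which forces the \emph{worst} perturbations to be color-constant and supplies the positive weight $P$; the remaining care is twofold: (i) making the maximum principle rigorous when $\sup_i z_i$ is not attained, via an $\varepsilon$-maximizer / Dini-derivative argument that uses boundedness of $\mathcal{L}$, and (ii) the passage between nonlinear and linearized stability, where one should either assume hyperbolicity of $v$ or phrase the equivalence at the level of exponential (linearized) stability, so as to avoid the degenerate case in which $A$ carries spectrum on the imaginary axis.
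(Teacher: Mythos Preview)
Your stationarity argument matches the paper's: both verify that the $i$-th residual of~\eqref{e:GDE-stationary} coincides with the $\Gamma(i)$-th coordinate of~\eqref{e:finite-stationary}; you simply spell out the multiset computation that the paper leaves implicit.

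For the stability equivalence the two proofs diverge sharply. The paper gives no argument at all --- it cites \cite[Lemma~1]{hupkesCountingOrderingPeriodic2019} and stops. You instead supply a self-contained proof built on the cooperative (Metzler) structure of the linearization $A=d(\mathbf{m}-kI)+\mathrm{diag}(f'(v_c))$: a positive sub-eigenvector $p$ of $A$ lifts via $L$ to a strictly positive weight $P$ with $\mathcal{L}P\le-\mu P$, and a weighted maximum principle then forces $e^{t\mathcal{L}}$ to contract on $\ell^\infty(V)$. This is a genuinely different route, and it buys two things the citation does not: it works directly for an arbitrary $k$-regular graph (the cited lemma is stated for periodic solutions on the integer lattice), and it exposes \emph{why} finite-dimensional stability suffices --- namely that cooperativity pushes the dominant growth direction into the color-constant subspace $S=L(\mathbb{R}^n)$. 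The paper's approach is of course shorter and defers all analytic work to the reference.

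Two technical points in your write-up deserve tightening, though you already flag both. First, the displayed inequality $\dot z_i\le (d/P_i)\sum_{j\in N(i)}P_j(z_j-z_i)$ is not unconditional: the exact identity is $P_i\dot z_i=[\mu P_i+(\mathcal{L}P)_i]\,z_i+d\sum_{j\in N(i)}P_j(z_j-z_i)$, and the extra term has a non-positive coefficient, so it only helps at a (near-)maximizer with $z_i\ge 0$; the max/min-principle conclusion is unaffected, but the phrasing ``$z$ is a subsolution'' overstates it. Second, your linearization step needs $f\in C^1$ and hyperbolicity of $v$, neither of which the theorem statement makes explicit; as you note, the clean way is to read ``asymptotically stable'' as ``exponentially (linearly) stable'' on both sides, which is almost certainly what the cited lemma delivers as well.
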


\begin{proof}
The formula~\eqref{e:finite-surjection} directly defines a map $u:V \to \mathbb{R}$ and if $u$ is a solution of~\eqref{e:GDE-stationary} then it is an $\mathbf{m}$-perfect stationary solution (cf.~Definition~\ref{d:perfect-solution}). To conclude the first part of the proof, we claim that $u$ is actually a solution of~\eqref{e:GDE-stationary}. Indeed, let $ i \in V $ be arbitrary, then the corresponding $ u_{i} $ satisfies~\eqref{e:GDE-stationary} by the $ \Gamma(i) $-th equation of finite system~\eqref{e:finite-stationary}.

The stability claim follows from~\cite[Lemma 1]{hupkesCountingOrderingPeriodic2019}. 
\end{proof}

\begin{rem} \label{r:finite-image-vs-perfect}
    By definition, for each $\mathbf{m}$-perfect stationary solution $u$ of~\eqref{e:GDE} the inequality $|\mathrm{Im}(u)| \le n$ holds. Note that the case $|\mathrm{Im}(u)| < n $ can commonly occur even for $ n = k = 2 $ (i.e., different colors from the perfect coloring can be both mapped to the same value of the stationary solution $ u $, see the following Example~\ref{ex:Im(u)<n}).
\end{rem}

\begin{ex}\label{ex:Im(u)<n}
Let $ G_{2} $ be the infinite path ($ V = \mathbb{Z} $) colored in the alternating manner by
\[
\Gamma(i) = \left\lbrace
\begin{array}{ll}
1, & i = 2l+1 \ \text{for} \ l \in \mathbb{Z}, \\
2, & i = 2l \ \text{for} \ l \in \mathbb{Z}.
\end{array}
\right.
\]
Then
\[
\mathbf{m} = \left[
\begin{array}{cc}
0 & 2 \\
2 & 0
\end{array}
\right]
\]
and~\eqref{e:finite-stationary} is then equivalent to
\begin{equation}\label{e:Im(u)<n}
\left\lbrace
\begin{array}{rcl}
0 & = & d(2v_{2}-2v_{1}) + f(v_{1}), \\
0 & = & d(2v_{1}-2v_{2}) + f(v_{2}).
\end{array}
\right.    
\end{equation}
Consider the prototypical bistable nonlinearity $ f(u) = u(1-u)\left(u-\tfrac{1}{2}\right) $, i.e., a symmetric case. From~\cite{stehlikExponentialNumberStationary2017} we deduce that for large $ d \gg 1 $ there are three homogeneous solutions $ v = (0,0) $, $ v = \left(\tfrac{1}{2},\tfrac{1}{2}\right) $, and $ v = (1,1) $ of~\eqref{e:Im(u)<n} (corresponding to spatially homogeneous steady states $ u_{i} \equiv 0 $, $ u_{i} \equiv \tfrac{1}{2} $, and $ u_{i} \equiv 1 $ of~\eqref{e:GDE-stationary}, respectively) from which another (heterogeneous) solutions bifurcate for decreasing $ d \searrow 0 $ and finally there are in total $ 3^{2}=9 $ solutions of~\eqref{e:Im(u)<n} for sufficiently small $ 0 < d \ll 1 $.

In particular, focusing on the solution
\[
v = \left( \tfrac{1}{2}, \tfrac{1}{2} \right),
\]
which corresponds to homogeneous $\mathbf{m}$-perfect stationary solution
\[
u = \left( \ldots,\tfrac{1}{2}, \tfrac{1}{2}, \tfrac{1}{2}, \tfrac{1}{2}, \tfrac{1}{2}, \tfrac{1}{2}, \ldots \right)
\]
of~\eqref{e:GDE}, we point out that $ |\mathrm{Im}(u)|=1<n=2 $ in this case. This is same for the two other homogeneous solutions. Furthermore, by the symmetry of $ f(u) $ and since $ f'\left( \tfrac{1}{2} \right) = \tfrac{1}{4} $, we deduce that for $ d < \tfrac{1}{16} $ (i.e., $ f'\left( \tfrac{1}{2} \right) > 4d $) there exists a solution $ \tilde{v} = (\tilde{v}_{1},\tilde{v}_{2}) $ of~\eqref{e:Im(u)<n} with $ \tilde{v}_{1} \in \left( 0, \tfrac{1}{2} \right) $ and $ \tilde{v}_{2} \in \left( \tfrac{1}{2}, 1 \right) $ which bifurcates from $ v = \left( \tfrac{1}{2}, \tfrac{1}{2} \right) $ at $ d=\tfrac{1}{16} $. The corresponding heterogeneous $\mathbf{m}$-perfect stationary solution of~\eqref{e:GDE} is
\[
\tilde{u} = \left( \ldots, \tilde{v}_{1}, \tilde{v}_{2}, \tilde{v}_{1}, \tilde{v}_{2}, \tilde{v}_{1}, \tilde{v}_{2}, \ldots \right)
\]
and there is $ |\mathrm{Im}(\tilde{u})|=2=n $ in this case (similarly for other heterogeneous solutions).
\end{ex}

\begin{rem}

With a given matrix $ \mathbf{m} \in \mathbb{N}_{0}^{n \times n} $, $ n \in \mathbb{N} $, the problem of existence of $ \mathbf{m} $-perfect stationary solutions of~\eqref{e:GDE} has by Theorem~\ref{t:main} the following three ingredients:
\begin{enumerate}[label=\emph{\Roman*.}]
    \item \emph{The existence of a solution of finite algebraic system~\eqref{e:finite-stationary}.} To analyze the solvability, the number and structure of solutions of~\eqref{e:finite-stationary}, one can apply any method as algebraic, topological, variational, or monotonicity methods, etc. (see, e.g.,~\cite{drabekMethodsNonlinearAnalysis2013}). We sum up several introductory comments on the relation of perfect solutions and solutions with finite image $ |\mathrm{Im}(u)|< \infty $ in the rest of this section.
    \item \emph{The existence of an $ \mathbf{m} $-perfect coloring of $ G_{k} $.} In this manuscript we are however mainly interested in the application of graph-theoretical results on perfect colorings. In Section~\ref{s:infinite-graphs} we discuss existence of $ \mathbf{m} $-perfect colorings of regular squa\-re/tri\-an\-gu\-lar/he\-xa\-go\-nal grids $ G_{k} $ and their relation to periodic solutions. We also discuss the number of such colorings and highlight that there can exist even uncountably many of such colorings with only two colors. This possibly yields a rich structure of existing stationary patterns of~\eqref{e:GDE}, see Section~\ref{s:application}. 
    \item \emph{The correspondence of values and colors.} Example~\ref{ex:Im(u)<n} shows that there can be less values in $ \mathrm{Im}(v) $ of the solution $ v $ of finite system~\eqref{e:finite-stationary} than colors of the perfect coloring. We resolve the issue by mergers of the colorings from Lemma~\ref{l:perfect-merger} -- i.e., creating simpler perfect colorings from the existing ones -- and by forthcoming Lemma~\ref{l:ss-merger}. See also Remark~\ref{r:periodic-is-perfect-sol} and Section~\ref{s:application} for specific application and comments.
\end{enumerate}
\end{rem}

In the next part of this section we discuss whether the opposite implication in Theorem~\ref{t:main} works as well:\\[1ex]
\indent \emph{Given a stationary solution $ u : V \to \mathbb{R} $ of~\eqref{e:GDE} on a $k$-regular graph $ G_{k} $ such that $ | \mathrm{Im}(u) | < \infty $, does it define an $ \mathbf{m} $-perfect coloring $ \Gamma $ of $ G_{k} $ with a finite $ \mathbf{m} \in \mathbb{N}_{0}^{n \times n} $, $ n \in \mathbb{N} $, such that $ u_{i} \neq u_{j} $ implies $ \Gamma(i) \neq \Gamma(j) $?}
\\[1ex]
Firstly, we show in the next lemma that it is true in the case of solutions which posses only two values.

\begin{lem}\label{l:im-equals-n}
Let $ k \in \mathbb{N} $ and $ G_{k} $ be given and $u$ be a stationary solution of~\eqref{e:GDE} such that $ |\mathrm{Im}(u)|=2 $. Then there exists $\mathbf{m} \in \mathbb{N}_{0}^{2 \times 2}$ such that $\sum_{j=1}^n m_{ij}=k$ for $ i = 1,2 $, and an $\mathbf{m}$-perfect coloring $\Gamma$ of the graph $G_k$ such that $ v $ given by $ v_{i} = u_{\Gamma^{-1}(i)} $, $ i \in V $, solves~\eqref{e:finite-stationary}.
\end{lem}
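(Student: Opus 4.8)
The plan is to read off both the coloring $\Gamma$ and the matrix $\mathbf{m}$ directly from the solution $u$. Write $\mathrm{Im}(u) = \{a,b\}$ with $a \neq b$ and define $\Gamma : V \to \{1,2\}$ by $\Gamma(i)=1$ when $u_i = a$ and $\Gamma(i)=2$ when $u_i=b$; both colors genuinely occur since $|\mathrm{Im}(u)|=2$. I would then set $v := (a,b)$, so that~\eqref{e:finite-surjection} holds and $v_i = u_{\Gamma^{-1}(i)}$ as the statement requires.

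The core step is to show that $\Gamma$ is perfect, i.e.\ that the neighborhood color counts are constant on each color class. Fix a vertex $i$ with $u_i = a$ and let $p_i$ be the number of its neighbors with value $b$. In the stationary equation~\eqref{e:GDE-stationary} the same-value neighbors contribute $u_j - u_i = 0$ while each value-$b$ neighbor contributes $b-a$, so the whole equation collapses to $0 = d\,p_i(b-a)+f(a)$. Since $d$, $a$, $b$ and $f(a)$ are fixed and $b - a \neq 0$, this pins $p_i$ down to a single value independent of $i$; note that $p_i$ is automatically a nonnegative integer, being a count of neighbors. I would put $m_{12} := p_i$, $m_{11}:=k-m_{12}$, and obtain $m_{21}, m_{22}$ symmetrically from the value-$b$ vertices. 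This produces $\mathbf{m} \in \mathbb{N}_0^{2\times2}$ with row sums $k$, and makes $\Gamma$ an $\mathbf{m}$-perfect coloring in the sense of Definition~\ref{d:perfect-coloring}.

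Finally I would verify that $v=(a,b)$ solves~\eqref{e:finite-stationary}. Its first component is $0 = d(m_{11}a + m_{12}b - ka)+f(a)$, which reduces via $m_{11} = k - m_{12}$ to $d\,m_{12}(b-a)+f(a)$ --- exactly the collapsed equation above; the second component follows by exchanging the roles of $a$ and $b$.

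The argument is essentially a single computation, so the main obstacle is conceptual rather than technical: recognizing that two-valuedness, together with $a\neq b$, forces the count $p_i$ to be the \emph{same} constant for every vertex of a given color. This constancy is precisely the perfect-coloring property, and it is what converts the (possibly infinite) system~\eqref{e:GDE-stationary} into the $2\times 2$ system~\eqref{e:finite-stationary}. I would also make sure to record that both color classes are nonempty, so that $v$ is genuinely indexed by both colors.
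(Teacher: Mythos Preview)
Your argument is correct and follows essentially the same route as the paper's proof: both extract from the stationary equation at a vertex of value $a$ that the number of $b$-neighbors is forced, using $b-a\neq 0$ and $k$-regularity. The only cosmetic difference is that the paper phrases this step as a contradiction (comparing two vertices $i,\tilde{i}$ with the same value but allegedly different neighbor counts), whereas you solve for $p_i$ directly from $0=d\,p_i(b-a)+f(a)$; the content is identical.
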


\begin{proof}
Let $ \mathrm{Im}(u) = \{ \alpha, \beta \} $, $ \alpha \neq \beta $. Denote
\[
m_{i\alpha} = | \{ j \in N(i): \ u_{j} = \alpha \} |, \quad m_{i\beta} = | \{ j \in N(i): \ u_{j} = \beta \} |
\]
We claim that there exists $ m_{\alpha\alpha} \in \mathbb{N}_{0} $ and $ m_{\alpha\beta} \in \mathbb{N}_{0} $ such that
\[
m_{i\alpha} = m_{\alpha\alpha}, \quad m_{i\beta} = m_{\alpha\beta} \quad \text{for every} \quad i \in V \quad \text{such that} \quad u_{i} = \alpha
\]
(and analogically for vertices with $ u_{i} = \beta $). Indeed, assume by contradiction that, without loss of generality, there are $ i, \tilde{i} \in V $ such that $ u_{i} = u_{\tilde{i}} = \alpha $ and $ m_{i\alpha} \neq m_{\tilde{i}\alpha} $ (which also implies that $ m_{i\beta} \neq m_{\tilde{i}\beta} $ since $ G_{k} $ is $k$-regular). Then GDE~\eqref{e:GDE} yields that
\[
0 = d(m_{i\alpha}\alpha + m_{i\beta} \beta - k\alpha) + f(\alpha)
\quad \text{and} \quad
0 = d(m_{\tilde{i}\alpha}\alpha + m_{\tilde{i}\beta} \beta - k\alpha) + f(\alpha)
\]
which together with $k$-regularity of $ G_{k} $ verify the following computations:
\[
\begin{array}{rcl}
m_{i\alpha}\alpha + m_{i\beta} \beta &=& m_{\tilde{i}\alpha}\alpha + m_{\tilde{i}\beta} \beta \\
m_{i\alpha}\alpha + (k-m_{i\alpha}) \beta &=& m_{\tilde{i}\alpha}\alpha + (k-m_{\tilde{i}\alpha}) \beta \\
(m_{i\alpha} - m_{\tilde{i}\alpha})(\alpha - \beta) &=& 0.
\end{array}
\]
Since $ \alpha \neq \beta $, there must be $ m_{i\alpha} = m_{\tilde{i}\alpha} $, a contradiction. Therefore, $ m_{\alpha\alpha} $ and $ m_{\alpha\beta} = k - m_{\alpha\alpha}$ are correctly defined. Identical reasoning can be done for vertices with value $\beta$, defining $m_{\beta\beta}$ and $m_{\beta\alpha} = k - m_{\beta\beta}$.

Setting the coloring $ \Gamma $ of $ G_{k} $ as
\[
\Gamma(i) = \left\lbrace
\begin{array}{ll}
1, & \text{if } u_{i} = \alpha, \\
2, & \text{if } u_{i} = \beta,
\end{array}
\right.
\]
we can define
\[
\mathbf{m} = \left[
\begin{array}{cc}
m_{\alpha\alpha} & k- m_{\alpha\alpha} \\
k-m_{\beta\beta} & m_{\beta\beta}
\end{array}
\right].
\]
Consequently, the coloring $ \Gamma $ is $ \mathbf{m} $-perfect and one can easily check that $ v $ given by $ v_{i} = u_{\Gamma^{-1}(i)} $ solves~\eqref{e:finite-stationary}.
\end{proof}

Furthermore, every stationary solution of GDE~\eqref{e:GDE} on a finite graph $G_k$ (i.e., $ |V| < +\infty $) is obviously perfect.

\begin{lem}\label{l:finite-image}
Let $k \in \mathbb{N}$ and a $k$-regular graph $G_k$ be such that $|V| < +\infty$. Then for each stationary solution $u$ of~\eqref{e:GDE} on $G_k$ there exists a suitable matrix $\mathbf{m} \in \mathbb{N}_{0}^{n \times n}$, $ n \in \mathbb{N} $, such that $u$ is an $\mathbf{m}$-perfect stationary solution. 
\end{lem}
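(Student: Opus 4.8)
The plan is to avoid the tempting but faulty idea of colouring each vertex by the value $u$ assigns to it: two vertices sharing a value of $u$ may have completely different neighbourhood structures, so the value-colouring need not be $\mathbf{m}$-perfect for any single matrix. Instead I would exploit the finiteness of $V$ and use the \emph{finest} possible colouring, in which every vertex receives its own colour. The point is that a colour class which is a singleton imposes no consistency condition across vertices, so the perfectness requirement of Definition~\ref{d:perfect-coloring} becomes automatic.

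Concretely, I would enumerate $V = \{1, \ldots, n\}$ with $n := |V| < \infty$, set $\Gamma := \mathrm{id}$ (so that $C = V$ and $\Gamma(i) = i$), and take $\mathbf{m}$ to be the adjacency matrix of $G_k$, i.e.\ $m_{ij} := 1$ if $j \in N(i)$ and $m_{ij} := 0$ otherwise. First I would check the admissibility of $\mathbf{m}$: its entries lie in $\mathbb{N}_0$ and, by $k$-regularity, each row sums to $|N(i)| = k$. Next I would verify that $\Gamma$ is $\mathbf{m}$-perfect: for a fixed $i \in V$ the only vertex of colour $i$ is $i$ itself, and its neighbourhood colour multiset is $[\Gamma(j)]_{j \in N(i)} = [j]_{j \in N(i)}$, which is exactly the multiset $(C, m_{i\cdot})$ encoded by the $i$-th row of $\mathbf{m}$; hence the defining identity of Definition~\ref{d:perfect-coloring} holds at every vertex.

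Finally I would produce the surjection $v : C \to \mathrm{Im}(u)$ by setting $v_{\Gamma(i)} := u_i$, that is, $v := u$ read as a vector indexed by $C$. Its image equals $\{u_i : i \in V\} = \mathrm{Im}(u)$, so $v$ is onto $\mathrm{Im}(u)$ and $v_{\Gamma(i)} = u_i$ holds by construction; together with the hypothesis that $u$ is a stationary solution, Definition~\ref{d:perfect-solution} then certifies that $u$ is $\mathbf{m}$-perfect. As a consistency check with Theorem~\ref{t:main}, note that for this choice of $\mathbf{m}$ the finite system~\eqref{e:finite-stationary} reduces componentwise exactly to~\eqref{e:GDE-stationary}, which $v = u$ solves precisely because $u$ is stationary. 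There is no genuine obstacle here---the only thing to guard against is the instinct to take $n = |\mathrm{Im}(u)|$, which would force a colouring that can fail to be perfect; the existence statement is salvaged simply by allowing $n$ as large as $|V|$.
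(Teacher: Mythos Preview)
Your proposal is correct and matches the paper's own proof essentially verbatim: the paper also takes $C := V$, the trivial coloring $\Gamma(i) = i$, and $\mathbf{m}$ the adjacency matrix of $G_k$. Your write-up is more detailed (explicitly verifying the row-sum condition, the perfectness identity, and the surjection $v$), and your warning against the naive choice $n = |\mathrm{Im}(u)|$ is a useful remark, but the underlying argument is identical.
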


\begin{proof}
 It is sufficient to find a perfect coloring $\Gamma$ of the graph $G_k$ such that for each $i,j \in V$ the equality $\Gamma(i) = \Gamma(j)$ implies $u_i = u_j$. One can take the color set $ C := V $ and consider the trivial coloring $ \Gamma(i) = i $, $ \quad i \in V $, which is therefore an $ \mathbf{m} $-perfect coloring with $ \mathbf{m} $ being the adjacency matrix of $ G $. 
\end{proof}

\begin{rem}
\label{r:periodic-vs-perfect}
Although finite $k$-regular graphs form a rather small subset of all $k$-regular graphs, Lemma~\ref{l:finite-image} indicates an interesting relation between periodic and perfect solutions. Specifically, every periodic stationary solution on a regular grid, i.e., solution created by repeating a finite pattern, is perfect, see Theorem~\ref{t:periodic-is-perfect} -- note that the opposite implication is not generally valid, see~Section~\ref{s:application}. Nonetheless, since the notion of periodicity is graph-specific, we postpone the issue to the forthcoming Section~\ref{s:infinite-graphs} in which we discuss periodic and perfect colorings of square/triangular/hexagonal grids.
\end{rem}

Unfortunately, a stationary solution of~\eqref{e:GDE} with $ | \mathrm{Im}(u) | < \infty $ on infinite graph $ G_{k} $ which possesses at least three values need not be $ \mathbf{m} $-perfect with any finite matrix $\mathbf{m} $ as the next example shows. 

\begin{ex}\label{ex:tree}
Let us have a biinfinite binary tree which is a $3$-regular graph $G_3 := T_2$. Let $u$ be a stationary solution of~\eqref{e:GDE} such that $\mathrm{Im}(u) = \{ a,b,c \}$, $a,b,c \in \mathbb{R}$. Finally, assume that there exists an infinite path $P$ in the graph $G_3$ indexed by integers $\mathbb{Z}$ such that  
\begin{equation}\label{e:ex:tree:coloring}
u_i = \begin{cases}
    a, & i = \pm \frac{1}{2}n(n+9) \quad \text{for} \quad n \in \mathbb{N}_0, \\
    b, & i = \pm \frac{1}{2}n(n+9)\pm 1 \quad \text{for} \quad n \in \mathbb{N}_0, \\
    c, & \text{otherwise},
\end{cases}
\end{equation}
see Figure~\ref{f:ex:tree}. For the sake of clarity, we color the vertices with the value $a$ as $A$, the vertices with the values $b$ as $B$, the outer vertices with the value $c$ as $C_1$ and finally, the inner vertices with the value $c$ as $C_2$, see Figure~\ref{f:ex:tree}. Let us denote this coloring $\widetilde{\Gamma}$. 
    \begin{figure}[ht]			
			\centering
           \includegraphics[width=\textwidth]{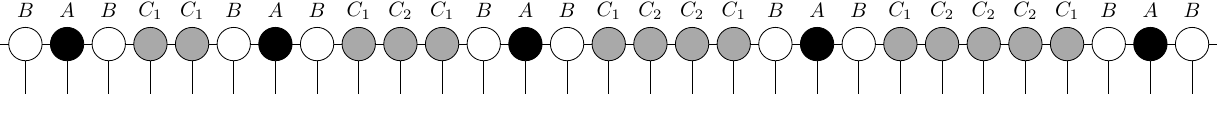} 
			\caption{An illustration of a finite-image stationary solution $u$ ($\mathrm{Im}(u)< +\infty$) of~\eqref{e:GDE} prescribed by~\eqref{e:ex:tree:coloring} on an infinite binary tree. The pattern develops accordingly in the negative direction. This solutions is not perfect, see Example~\ref{ex:tree}.}
			\label{f:ex:tree}
		\end{figure}

Furthermore, let us assume the following distribution of the values $a,b,c$ throughout the whole tree
\begin{equation}\label{e:ex:tree:coloring2}
[u_j]_{j \in N(i)} = \begin{cases}
[a,b,b], & \widetilde{\Gamma}(i)=A, \\
[a,c,c], & \widetilde{\Gamma}(i)=B, \\
[b,b,c], & \widetilde{\Gamma}(i)=C_1, \\
[a,c,c], & \widetilde{\Gamma}(i)=C_2,
\end{cases}
\end{equation}
by which we iteratively extend the coloring $\widetilde{\Gamma}$ of the path $P$ to a coloring $\Gamma$ of the whole graph $G_3$. 
 This distribution of the values exists for given values along the path $P$ and it is unique up to isomorphism. The growing islands of the vertices with the value $c$ along $P$ contradict the existence of $\mathbf{m}$-perfect coloring corresponding to the stationary solution $u$. Let us look into this in greater detail. Start at the leftmost vertex in Figure~\ref{f:ex:tree}. The coloring of the two-vertex and the three-vertex gray islands can be perfect with the prescribed neighborhood sets. The next gray island consisting of the sequence $-\!C_1\!-\!C_2\!-\!C_2\!-\!C_1\!-$ however poses a first problem. In order to preserve a perfect coloring we need to introduce a new color, say $C_3$, instead of $C_2$. The introduction of a new color is necessary in each of the subsequent islands and thus, the number of colors grows infinitely. 

 Finally, we show that $u$ can indeed exist. The values $a,b,c$ must satisfy
 \begin{align}
     a+2b-3a+f(a) &= 0, \nonumber  \\
     a+2c-3b+f(b) &= 0, \nonumber \\
     2b+c-3c+f(c) &= 0, \label{e:ex:tree:1} \\
     a+2c-3c+f(c) &= 0. \label{e:ex:tree:2}
 \end{align}
Given any triplet $a,b,c$ satisfying $-a+2b-c=0$, i.e., the difference of~\eqref{e:ex:tree:1} and~\eqref{e:ex:tree:2}, there exists a nonlinearity $f$ such that $u$ is a stationary solution of~\eqref{e:GDE} on $G_3$ with $|\mathrm{Im}(u)|< +\infty$ and which is not perfect with finite number of colors.

\end{ex}

We conclude the treatise of perfect stationary solutions with a short note about the connection of the merging of perfect colorings a their respective perfect stationary solutions and the focal claim that every periodic stationary solution is a perfect stationary solution. 
\begin{lem}\label{l:ss-merger}
Let $ k \in \mathbb{N} $, a $k$-regular graph $G_k$ be given and let $\Gamma_1$, $\Gamma_2$ be two colorings of $G_k$ with their respective color sets $C_1 \subsetneq C_2$ such that $\ell = |C_1| < |C_2| = n$.
Let $ \Gamma_{1} $ be $\mathbf{m}_1$-perfect, $ \Gamma_{2} $ be $\mathbf{m}_2$-perfect, and let $\Gamma_1$ be a merger of $\Gamma_2$ with a merging surjective map $\phi: C_{2} \to C_{1} $. If $v_1 \in \mathbb{R}^{\ell} $ is a solution of 
\begin{equation} \label{e:merger-m1}
0 = d(\mathbf{m}_1 \, \cdot \, v_1 - k v_1) + F(v_1), 
\end{equation}
then $v_2 \in \mathbb{R}^{n}$ given by
\[
(v_{2})_i = (v_{1})_{\phi(i)}, \quad i = 1,2, \ldots, n,
\]
is a solution of 
\begin{equation} \label{e:merger-m2}
0 = d(\mathbf{m}_2 \, \cdot \, v_2 - k v_2) + F(v_2),
\end{equation}
in which the matrices $\mathbf{m}_1$ and $\mathbf{m}_2$ are related via~\eqref{e:perfect-merger}.
\end{lem}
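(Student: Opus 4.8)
The plan is to verify \eqref{e:merger-m2} componentwise, reducing its $i$-th equation (for an arbitrary color $i \in C_2$) to the $\phi(i)$-th equation of \eqref{e:merger-m1}, which holds by the hypothesis on $v_1$. Throughout I read $F$ componentwise, $F(w)_i = f(w_i)$, and I adopt the normalization $\phi(i)=i$ for $i \in C_1$ under which \eqref{e:perfect-merger} was stated; this is harmless after relabelling the colors of $C_1$. I also tacitly assume that every color of $C_2$ (hence of $C_1$) is actually attained, which is implicit in speaking of an $\mathbf{m}_2$-perfect coloring, since otherwise the corresponding row of $\mathbf{m}_2$ carries no information.

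First I would fix $i \in C_2$ and write the $i$-th row of \eqref{e:merger-m2}. Substituting $(v_2)_s = (v_1)_{\phi(s)}$ turns the diffusion term into $\sum_{s=1}^{n} (\mathbf{m}_2)_{is}(v_1)_{\phi(s)} - k(v_1)_{\phi(i)}$. Since $(v_1)_{\phi(s)}$ is constant and equal to $(v_1)_j$ on each fibre $\phi^{-1}(j)$, regrouping the sum over the fibres of $\phi$ gives
\[
\sum_{s=1}^{n} (\mathbf{m}_2)_{is}(v_1)_{\phi(s)} = \sum_{j \in C_1} (v_1)_j \sum_{s \in \phi^{-1}(j)} (\mathbf{m}_2)_{is}.
\]

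The crux is the identity
\[
\sum_{s \in \phi^{-1}(j)} (\mathbf{m}_2)_{is} = (\mathbf{m}_1)_{\phi(i)\,j} \quad \text{for all } i \in C_2, \ j \in C_1,
\]
which upgrades \eqref{e:perfect-merger} (valid a priori only for $i \in C_1$) to every $i \in C_2$. I would prove it by double counting. Pick any vertex $w$ with $\Gamma_2(w)=i$, so that $\Gamma_1(w)=\phi(i)$. On one hand, by $\mathbf{m}_1$-perfectness of $\Gamma_1$ the number of neighbors of $w$ carrying $\Gamma_1$-color $j$ equals $(\mathbf{m}_1)_{\phi(i)\,j}$. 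On the other hand, a neighbor has $\Gamma_1$-color $j$ precisely when its $\Gamma_2$-color lies in $\phi^{-1}(j)$, so by $\mathbf{m}_2$-perfectness of $\Gamma_2$ this same count equals $\sum_{s \in \phi^{-1}(j)}(\mathbf{m}_2)_{is}$. Equating the two counts yields the identity; its content is exactly that the fibre-sum of a row of $\mathbf{m}_2$ depends only on the merged color $\phi(i)$ and not on $i$ itself.

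With the identity in hand, the regrouped diffusion term equals $(\mathbf{m}_1 v_1)_{\phi(i)} - k(v_1)_{\phi(i)}$, and the reaction term is $f((v_1)_{\phi(i)}) = F(v_1)_{\phi(i)}$. Hence the $i$-th equation of \eqref{e:merger-m2} reads $d( (\mathbf{m}_1 v_1)_{\phi(i)} - k(v_1)_{\phi(i)} ) + F(v_1)_{\phi(i)} = 0$, which is the $\phi(i)$-th equation of \eqref{e:merger-m1} and therefore holds because $v_1$ solves \eqref{e:merger-m1}. As $i$ ranges over $C_2$, $\phi(i)$ exhausts $C_1$ by surjectivity, so every equation of \eqref{e:merger-m2} is satisfied and $v_2$ is a solution. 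I expect the only substantive step to be the crux identity above; the substitution and the fibre regrouping are routine bookkeeping.
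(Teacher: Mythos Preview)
Your proof is correct and follows the same underlying idea as the paper's: the $i$-th equation of \eqref{e:merger-m2}, after the substitution $(v_2)_i=(v_1)_{\phi(i)}$, collapses to the $\phi(i)$-th equation of \eqref{e:merger-m1}. The paper's argument is a one-sentence remark that \eqref{e:merger-m1} arises from \eqref{e:merger-m2} by forcing the entries in each fibre $\phi^{-1}(j)$ to coincide; your version makes this rigorous by isolating and proving the identity $\sum_{s\in\phi^{-1}(j)}(\mathbf{m}_2)_{is}=(\mathbf{m}_1)_{\phi(i)\,j}$ for \emph{all} $i\in C_2$ via double counting at a vertex of $\Gamma_2$-color $i$, which is exactly the step the paper leaves implicit.
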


\begin{proof}
   Without loss of generality assume that the vectors $v_1$ and $v_2$ are indexed by the color sets $C_1$ and $C_2$ respectively. It is now sufficient to realize that~\eqref{e:merger-m1} originates from the system~\eqref{e:merger-m2} by requiring certain entries of $v_2$ to be equal to each other. Indeed, the formula~\eqref{e:perfect-merger} effectively requires the entries $i$ of $v_2$ which are in the common preimages of $\phi_{-1}(j)$, $j \in C_1$, to be equal.
\end{proof}

We present a specific application of Lemma~\ref{l:ss-merger} in Section~\ref{s:application}.

\section{Perfect colorings of infinite graphs}\label{s:infinite-graphs}
The forthcoming section is divided into two parts. We first handle the question of the existence of perfect colorings on an infinite path graph. We then turn our attention to particular two-dimensional infinite grids: square, triangular, and hexagonal grid. Namely, we examine the connection between periodic and perfect colorings, Theorems~\ref{t:periodic-is-perfect} and~\ref{t:periodic-is-perfect-sol}, and the existence and the cardinality of perfect colorings on the grids.

\subsection{Path graph}
We start the discussion about the existence of perfect stationary solution on particular infinite graphs with the simplest case -- an infinite path graph. The existence of periodic stationary solutions on the path graph has been examined in~\cite{hupkesCountingOrderingPeriodic2019}. We provide a sufficient condition on the existence of a perfect coloring on a path graph and connect the notion of perfect solutions to the periodic ones. 

The vertex set of the path graph is usually identified with the set of integers $V = \mathbb{Z}$ and the set of edges $E = \{ \{i, i+1 \} \, | \, i \in \mathbb{Z} \}$. A coloring $\Gamma:V \to C $ is periodic if there exists $\ell \in \mathbb{N}$ such that $\Gamma(i) = \Gamma(i + \ell)$ for all $i \in \mathbb{Z}$. 
Given $n \in \mathbb{N}$ colors, every $\mathbf{m}$-perfect coloring with $n$ colors of an infinite path graph is periodic,~\cite{puzyninaPeriodicityGeneralizedTwodimensional2009}. Interestingly, the period is not directly related to the number of used colors, see Example~\ref{ex:path} for a perfect coloring with two colors which is $3$-periodic. Note that this is not generally valid in the case of more complicated lattices -- see Section~\ref{s:application} for examples of perfect and aperiodic colorings of the square grid.

We summarize the question of existence of a perfect periodic coloring of infinite path graph in the following lemma.

\begin{lem}\label{l:path}
    Let $ n \in \mathbb{N} $ and $\mathbf{m} \in \mathbb{N}_{0}^{n \times n}$ be an irreducible matrix which has row and column sums equal to $2$ and let $m_{ij} =0$ if and only if $m_{ji}=0$ for all $i,j = 1, \ldots, n$. Then there exists a unique (up to its shift and a reflection) $\mathbf{m}$-perfect coloring of a path graph which is periodic. 
\end{lem}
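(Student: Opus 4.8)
The plan is to reformulate the problem as a deterministic, invertible dynamical system on the finite set of admissible adjacent colour pairs, and then read off existence, periodicity and uniqueness from its cycle structure. Identify a coloring $\Gamma$ with the bi-infinite word $(\Gamma(i))_{i\in\mathbb{Z}}$. Since the neighborhood of $i$ on the path is $\{i-1,i+1\}$, the defining condition of Definition~\ref{d:perfect-coloring} says exactly that the two-element multiset $\{\Gamma(i-1),\Gamma(i+1)\}$ equals row $\Gamma(i)$ of $\mathbf{m}$, for every $i$. Let $P=\{(x,y):m_{xy}>0\}$; by the sign condition $(x,y)\in P \iff (y,x)\in P$.

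First I would record the crucial local rule. If $\Gamma(i)=x$ and $\Gamma(i+1)=y$, then $m_{xy}>0$, hence $m_{yx}>0$ by the sign condition, so $x$ occurs in row $y$; as that row sums to $2$ it consists of $x$ and a uniquely determined partner $z$ (with $z=x$ when $m_{yx}=2$), and the perfect condition at $i+1$ forces $\Gamma(i+2)=z$. This defines $\tau:P\to P$, $\tau(x,y)=(y,z)$, and the identical argument read from right to left produces a two-sided inverse, so $\tau$ is a permutation of the finite set $P$. A bi-infinite $\mathbf{m}$-perfect coloring is then the same datum as a full orbit $(\tau^{i}p_0)_{i\in\mathbb{Z}}$ (recovering $\Gamma$ from first coordinates), and conversely. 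Existence is immediate since $P\neq\emptyset$ (every row has a positive entry), and periodicity is automatic because a permutation of a finite set has only periodic orbits; this recovers the periodicity asserted in the statement.

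The substance of the lemma is uniqueness up to shift and reflection, which I would phrase as transitivity of the group $\langle\tau,\rho\rangle$ on $P$, where $\rho(x,y)=(y,x)$ is the reflection involution (well defined by the sign condition and corresponding to reversing the path). A short computation gives $\rho\tau\rho=\tau^{-1}$, so $\langle\tau,\rho\rangle/\langle\tau\rangle$ has order at most two and, modulo shifts, there are at most two colorings, interchanged by reflection; transitivity then upgrades this to the claim. To prove transitivity I would first show that all pairs incident to a fixed colour $c$ lie in one $\langle\tau,\rho\rangle$-orbit: if row $c=\{a,b\}$ then $\tau^{-1}(c,a)=(b,c)$, whence $\rho\tau^{-1}(c,a)=(c,b)$, so the outgoing pairs $(c,a),(c,b)$ share an orbit (the degenerate rows $\{a,a\}$ and $\{c,a\}$ are handled identically, the latter via $\tau^{-1}(c,a)=(c,c)$), and applying $\rho$ brings in the incoming pairs $(a,c),(b,c)$. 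Since $(x,y)$ is incident to both $x$ and $y$, whenever $m_{xy}>0$ the colour-$x$ orbit and the colour-$y$ orbit coincide; irreducibility of $\mathbf{m}$ makes the undirected support graph connected, so propagating this equality along a path between any two colours shows that all of $P$ is a single orbit.

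The main obstacle is precisely this transitivity step together with the surrounding bookkeeping: one must verify the partner/transition formulas in every row type (including the loop case $m_{cc}=1$ and the doubled case $m_{cy}=2$), and then translate ``single $\langle\tau,\rho\rangle$-orbit'' correctly into ``unique up to shift and reflection'', using that each $\tau$-cycle is mapped by the group either to itself or to its $\rho$-image. I would also note that the hypotheses are slightly more than needed: only the row-sum condition, the sign condition and irreducibility enter the argument, whereas the column-sum hypothesis is not used here (although, combined with the others, it can be shown to force $\mathbf{m}$ to be genuinely symmetric).
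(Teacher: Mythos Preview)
Your argument is correct and takes a genuinely different route from the paper's. The paper first uses both the row \emph{and} column sum conditions together with irreducibility to argue that $\mathbf{m}$ has only $0/1$ entries, interprets $\mathbf{m}$ as the adjacency matrix of a strongly connected $2$-in $2$-out digraph $\vec{H}$, and then \emph{classifies} the possible shapes of $\vec{H}$ (either a bidirected cycle $\vec{H}_1$ or a bidirected path with self-loops at the ends $\vec{H}_2$). Existence is obtained by reading off an Eulerian circuit in each case, and uniqueness by a propagation argument after fixing one vertex and its two neighbours. In contrast, you bypass the structural classification entirely: the edge-shift map $\tau$ on $P=\{(x,y):m_{xy}>0\}$ encodes propagation uniformly, periodicity falls out of finiteness, and your transitivity argument for $\langle\tau,\rho\rangle$ using irreducibility replaces the case analysis. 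What the paper's approach buys is explicit period information (period $n$ for the cycle type, $2n$ for the path-with-loops type), which is not immediately visible in your formulation; what your approach buys is a cleaner, case-free proof, together with the observation that the column-sum hypothesis is superfluous. Your remark that only row sums, sign symmetry and irreducibility are actually needed is correct and slightly sharper than what the paper uses.
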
 
\begin{proof}
     Our proof relies on a graph-theoretical argument. Since the row and column sums of $\mathbf{m}$ are equal to 2 and since $\mathbf{m}$ is irreducible, we conclude that $\mathbf{m}$ has only $0$ and $1$ entries. Indeed, there can not be an entry greater than $2$ due to the sum condition, and any $2$ entry creates a block in the matrix thus violating the irreducibility condition. It follows that $\mathbf{m}$ is an adjacency matrix of a strongly connected, directed graph $\vec{H}$ with constant in- and out-degree $2$. Due to the assumption of sign symmetry of $\mathbf{m}$ (see Lemma~\ref{l:symmetry}) we can observe the following: if there is an arc in $ \vec{H} $ from $i$ to $j$, then there is an arc from $j$ to $i$. 
     
     It follows immediately from the properties of $ \vec{H} $ that there are two possible configurations of $ \vec{H} $ -- it is either a directed cycle graph with two-way arcs, denoted by $ \vec{H}_1 $, or it is a strongly connected path graph with self-loops at the end vertices, denoted by $\vec{H}_2$. In both of the cases, the graph $ \vec{H} $ possesses two directed Eulerian cycles which differ only by their directions. Given one particular configuration of the graph $ \vec{H} $, the Eulerian path defines a sequence of colors which -- if repeated on the infinite path graph -- creates a coloring of the infinite path graph.  This coloring is $n$-periodic provided $ \vec{H} $ is in the form of $\vec{H}_1$ and it is $2n$-periodic in the case of $\vec{H}_2$.
     
     It remains to show that the coloring is unique with respect to the shift and the reflection of the infinite path graph. Pick an arbitrary color, say $c$, and assign it to a vertex $i$ of the infinite path. There are then two ways how to color the neighbors of $i$ in accordance with the rules of the perfect coloring, they differ by an exchange of the left and the right neighbor. From this point, the matrix $\mathbf{m}$ (alternatively the graph $\vec{H}$) uniquely determines the coloring of the remaining vertices in both directions. Different choice of either the color $c$ or the vertex $i$ results in a shifted coloring. Different choice of the left and right neighbors of $i$ results in a reflected coloring. 
     \end{proof}
\begin{rem}
   The assumption of irreducibility of the matrix $\mathbf{m}$ in Lemma~\ref{l:path} can be relaxed. However, the resulting coloring is then not unique. Indeed, if $\mathbf{m}$ is reducible, the argument in the proof of Lemma~\ref{l:path} can be repeated for each of the irreducible blocks of $\mathbf{m}$. The number of distinct colorings then corresponds to the number of the blocks. 
   
   The sign-symmetry and the condition on the row and column sums are necessary conditions -- and in the context of Lemma~\ref{l:path} also sufficient -- conditions ensuring that $\mathbf{m}$ is a matrix of some perfect coloring. 
\end{rem}
We conclude this paragraph with an illustration of Lemma~\ref{l:path}.
\begin{ex}\label{ex:path-grids}
    Let 
    \begin{equation}\label{e:path}
    \mathbf{m}_1 = \left[ 
        \begin{array}{ccc}
        0 & 1 & 1 \\
        1 & 0 & 1 \\
        1 & 1 & 0
        \end{array} 
        \right],
        \qquad 
    \mathbf{m}_2 = \left[ 
        \begin{array}{ccc}
        1 & 1 & 0 \\
        1 & 0 & 1 \\
        0 & 1 & 1
        \end{array} 
        \right], 
    \end{equation}
    be two matrices which satisfy the assumptions of Lemma~\ref{l:path}. 
    The matrix $\mathbf{m}_1$ is an adjacency matrix of a directed cycle graph depicted in Figure~\ref{f:ex:path:cycle-directed}. An Eulerian path gives us a 3-periodic sequence of colors which creates the $\mathbf{m}_1$-perfect coloring of an infinite path graph, see Figure~\ref{f:ex:path:cycle-graph}. 

    The matrix $\mathbf{m}_2$ is an adjacency matrix of a strongly connected path graph with loops at the end vertices which is depicted in Figure~\ref{f:ex:path:path-directed}. The Eulerian path generates a 6-periodic sequence which is a $\mathbf{m}_2$-periodic coloring of an infinite path graph, see Figure~\ref{f:ex:path:path-graph}. 
    
    \begin{figure}[ht]
         \centering
         \begin{subfigure}[b]{0.2\textwidth}
             \centering
             \includegraphics[width=.5\textwidth]{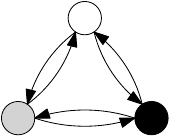}
             \caption{}
             \label{f:ex:path:cycle-directed}
         \end{subfigure}
         \hfill
         \begin{subfigure}[b]{0.75\textwidth}
             \centering
             \includegraphics[width=\textwidth]{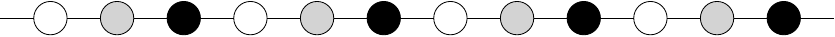}
             \caption{}
             \label{f:ex:path:cycle-graph}
         \end{subfigure}
        \hfill 
        \begin{subfigure}[b]{0.2\textwidth}
             \centering
             \includegraphics[width=\textwidth]{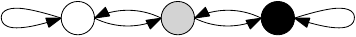}
             \caption{}
             \label{f:ex:path:path-directed}
         \end{subfigure}
         \hfill
         \begin{subfigure}[b]{0.75\textwidth}
             \centering
             \includegraphics[width=\textwidth]{fig/patt-ex-path-path-graph.pdf}
             \caption{}
             \label{f:ex:path:path-graph}
         \end{subfigure}
        \caption{Colorings and graphs constructed in Example~\ref{ex:path}. The panels~\ref{f:ex:path:cycle-directed} and~\ref{f:ex:path:path-directed} depict strongly connected graphs with adjacency matrices $\mathbf{m}_1$ and $\mathbf{m}_2$, respectively, which are defined in~\eqref{e:path}. These matrices represent two qualitatively different graphs admissible by the Eulerian paths of the graph in Figure~\ref{f:ex:path:cycle-directed} and in Figure~\ref{f:ex:path:path-directed} generate perfect -- and periodic at the same time -- coloring from Figure~\ref{f:ex:path:cycle-graph} and Figure~\ref{f:ex:path:path-graph}, respectively. }
            \label{f:ex:path:existence}
    \end{figure}
\end{ex}
\subsection{Regular two-dimensional grids}
As examples of two-dimensional grids we discuss square, hexagonal, and triangular grid. 
\begin{defin}
    Let $T$ be a regular tessellation of an Euclidean plane $\mathbb{R}^2$ with squares/triangles/hexagons. A square/triangular/hexagonal grid is a graph $G=(V,E)$ where $V$ is the set of vertices of the tessellation $T$ and $E$ is the set of edges of the tessellation $T$. 
\end{defin}

We first examine how perfect colorings relate to periodic colorings (recall that in the case of one-dimensional lattice -- the infinite path -- every perfect solution is periodic, \cite{puzyninaPeriodicityGeneralizedTwodimensional2009}). In comparison to the general exposure of perfect stationary solutions on $k$-regular graphs in Section~\ref{s:perfect}, the notion of periodicity is graph-specific. We start with the definition of a periodic coloring of a square grid which can be readily extended to a hexagonal and a triangular grid. 
\begin{defin}\label{d:square}
    Let $\Gamma$ be a coloring of a regular square grid $G$. The coloring $\Gamma$ is \textit{periodic}, if there exist two linearly independent vectors $v_1, v_2 \in \mathbb{Z}^2, v_1, v_2 \neq 0$, such that $\Gamma(i) = \Gamma(t_{v_1}(i))$ and $\Gamma(i) = \Gamma(t_{v_2}(i))$ for all $i \in V$, in which $t_v$ is a translation by a vector $v$. If necessary, we call the particular coloring $(v_1,v_2)$-periodic.
\end{defin}
\begin{rem}\label{r:tri}
   The vertices of the triangular grid which originates from the tiling with equilateral triangles can be shifted so that their coordinates are integers, see Figure~\ref{f:tri}. This allows us to extend the definition of a periodic solution (Definition~\ref{d:square}) to the case of the triangular grid. 
\end{rem}
\begin{figure}
    \centering
    \begin{subfigure}[b]{.45\textwidth}
    \includegraphics[width=\linewidth]{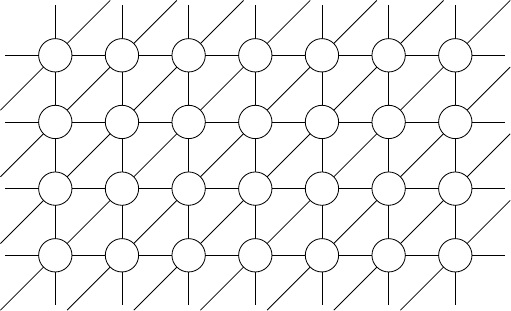}
    \caption{Layout of a triangular grid such that the coordinates of the vertices are integers, see Remark~\ref{r:tri}}
    \label{f:tri}
    \end{subfigure}
   \begin{subfigure}[b]{.45\textwidth}
       \includegraphics[width=\linewidth]{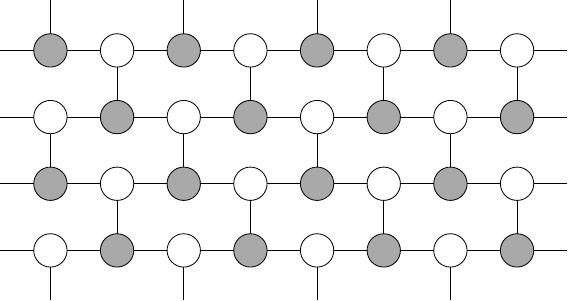}
    \caption{Alternative layout of a hexagonal grid in which the vertices have coordinates in $\mathbb{Z}^2$ mentioned in Remark~\ref{r:hex}. The gray and white colors form two partitions of the grid.} 
    \label{f:hex}
   \end{subfigure} 
   \caption{Alternative layouts of the triangular and the hexagonal grid which serve to introduce periodic colorings via Definition~\ref{d:square}.}
\end{figure}
\begin{rem}\label{r:hex}
There exists a natural placement of vertices of a hexagonal grid to the coordinates $\mathbb{Z}^2$, see Figure~\ref{f:hex}. The periodicity of a coloring on a hexagonal grid can be then defined through a pair of linearly independent nonzero vectors $v_1, v_2 \in \mathbb{Z}^2$ as in Definition~\ref{d:square} with additional condition that 
\begin{equation}\label{e:hex:per}
    (v_{1})_{x}=(v_{1})_{y}=(v_{2})_{x}=(v_{2})_{y} = 0 \quad \mathrm{mod} \quad 2, 
\end{equation}
in which $v_1 = ((v_{1})_{x},(v_{1})_{y}), v_2 = ((v_{2})_{x},(v_{2})_{y})$. In particular, the translation is invariant for the two partitions of the hexagonal grid, see again Figure~\ref{f:hex}. 
\end{rem}

Having the crucial definition of periodic colorings at hand (Definition~\ref{d:square} and Remarks~\ref{r:tri},~\ref{r:hex}) we can now formally show that every periodic coloring on square/triagonal/hexagonal grid can be expressed as an isomorphic coloring. The form of the theorem is not straightforward since not every periodic coloring is exactly an isomorphic coloring. We claim that given a periodic coloring $\widetilde\Gamma$, we can define a perfect coloring $\Gamma$ such that it preserves the distinction of colors of $\widetilde\Gamma$ while possibly adding new ones.     

\begin{thm}\label{t:periodic-is-perfect}
   Let $G_{k}$ be either a square grid ($k=4$), or a triangular grid ($k=6$), or a hexagonal grid ($k=3$) and let $\widetilde\Gamma:V \to \widetilde C$ be a periodic coloring of $G_{k}$. Then there exists perfect $\Gamma: V \to C$, such that $\widetilde\Gamma(i) \neq \widetilde\Gamma(j)$ implies $\Gamma(i) \neq \Gamma(i)$ for each pair of vertices $i,j \in V$.
\end{thm}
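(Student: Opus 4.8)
The plan is to refine $\widetilde\Gamma$ by recoloring each vertex according to its position within a fundamental domain of the lattice of periods. Using the embeddings of Remarks~\ref{r:tri} and~\ref{r:hex}, I identify $V$ with $\mathbb{Z}^2$ (for the square and triangular grids) or with an $L$-invariant subset of $\mathbb{Z}^2$ (for the hexagonal grid), and set $L := \mathbb{Z} v_1 + \mathbb{Z} v_2$, where $v_1, v_2$ are the period vectors from Definition~\ref{d:square}. Since $v_1, v_2$ are linearly independent, $L$ has finite index, so there are only finitely many cosets; and since $\widetilde\Gamma$ is invariant under translation by $v_1$ and by $v_2$, it is invariant under the entire group $L$ acting by translations. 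I would first record that each $t_\ell$, $\ell \in L$, maps vertices to vertices: this is immediate when $V = \mathbb{Z}^2$, and for the hexagonal grid it follows from~\eqref{e:hex:per}, which forces $L \subseteq 2\mathbb{Z}^2$ and hence preserves the two partitions of Figure~\ref{f:hex}.

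Next, define the equivalence $i \sim j$ to mean $j = t_\ell(i)$ for some $\ell \in L$, enumerate the finitely many classes as $C = \{1,\dots,n\}$, and let $\Gamma(i)$ be the class of $i$. The refinement property is then immediate: if $i \sim j$ then $\widetilde\Gamma(i) = \widetilde\Gamma(j)$ by $L$-periodicity, so $\widetilde\Gamma(i) \neq \widetilde\Gamma(j)$ forces $i \not\sim j$, that is $\Gamma(i) \neq \Gamma(j)$, which is exactly the claimed implication (with $C$ possibly larger than $\widetilde C$).

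It remains to verify that $\Gamma$ is perfect in the sense of Definition~\ref{d:perfect-coloring}. The key observation is that the set of displacement vectors from a vertex to its neighbors depends only on the $\sim$-class of that vertex. For the square and triangular grids the neighborhood structure is translation-invariant, so this displacement set is the same at every vertex; for the hexagonal grid it takes one of two values according to which partition the vertex lies in, and~\eqref{e:hex:per} guarantees that this partition is constant along each $\sim$-class. Consequently, if $i$ has class $c$ and $j = t_d(i)$ is a neighbor reached by a displacement $d$, then the class of $j$ is the well-defined translate of $c$ by $d$, hence determined by $c$ alone. Therefore the multiset $[\Gamma(j)]_{j \in N(i)}$ depends only on $\Gamma(i) = c$; collecting these multisets as the rows of a matrix $\mathbf{m} \in \mathbb{N}_0^{n\times n}$ gives $[\Gamma(j)]_{j \in N(i)} = (C, m_{\Gamma(i)\cdot})$ for all $i$, with $\sum_{j} m_{cj} = k$ since every vertex has exactly $k$ neighbors, so $\Gamma$ is $\mathbf{m}$-perfect.

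I expect the perfectness step for the hexagonal grid to be the main obstacle. Unlike the square and triangular lattices, the honeycomb lattice is not vertex-transitive under translations, so a position-modulo-$L$ recoloring that ignored the bipartition could place vertices of different partitions in the same class and thereby fail to have a well-defined neighbor multiset. The evenness condition~\eqref{e:hex:per} is precisely what rescues the argument, since it ensures $L$ refines the bipartition and keeps the displacement set constant on each class; I would take care to invoke it explicitly. The remaining content is the elementary fact that translating by a period permutes neighbors consistently, which underlies the well-definedness of the induced translation on classes.
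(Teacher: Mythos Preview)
Your proof is correct and takes essentially the same approach as the paper: both construct $\Gamma$ by coloring each vertex according to its coset modulo the period lattice $L = \mathbb{Z}v_1 + \mathbb{Z}v_2$, the paper via a geometric fundamental parallelogram $V_f$ and you via the algebraic quotient $V/L$. Your treatment of the hexagonal case---invoking~\eqref{e:hex:per} to ensure $L$ preserves the bipartition so that the displacement set is constant on each class---is in fact more explicit than the paper's, which leaves that verification implicit.
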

\begin{proof}
    The proof relies on a construction of $\Gamma$ given by a periodic coloring $\widetilde\Gamma$. 
    Let $\Gamma$ be a periodic coloring, there exist two linearly independent vectors $v_1, v_2 \in \mathbb{Z}^2$ such that the coloring $\Gamma$ is invariant with respect to the shift in the directions $v_1$ and $v_2$. Note that in the case of hexagonal grid the vectors $v_1$ and $v_2$ satisfy the condition~\eqref{e:hex:per} from Remark~\ref{r:hex}. The vectors $v_1$ and $v_2$ form a parallelogram which can tile the plane. Place the parallelogram such that one of its corners coincides with a grid vertex and without loss of generality pick two adjacent sides of the parallelogram. Now, define a subset of vertices $V$ as the vertices coinciding with the parallelogram with the exception of the two chosen sides; denote it by $V_f$ (in which $f$ stands for finite). Assign each vertex from $V_f$ a unique color. See Figure~\ref{f:t:per:grid} for an example of the the placement of the parallelogram and the choice of the vertices $V_f$. It is now straightforward that $\widetilde\Gamma(i) \neq \widetilde\Gamma(j)$ implies $\Gamma(i) \neq \Gamma(j)$ for each pair of vertices $i,j \in V_{f}$.

    It remains to show that the extension of this finite coloring is a perfect coloring. First, extend the coloring to each tile of the parallelogram tiling; note that the choice of vertices in the parallelogram ensures that each of the vertices of $G_{k}$ can be uniquely assigned a color. Denote this coloring by $\Gamma$. It follows from the construction that $\Gamma$ is also a periodic coloring and vertex of each color has uniquely defined structure of colors in its neighborhood. See Figure~\ref{f:t:per:finite} for an illustration. 
\end{proof}
\begin{figure}
    \centering
    \begin{subfigure}[b]{.65\textwidth}
    \includegraphics[width=\textwidth]{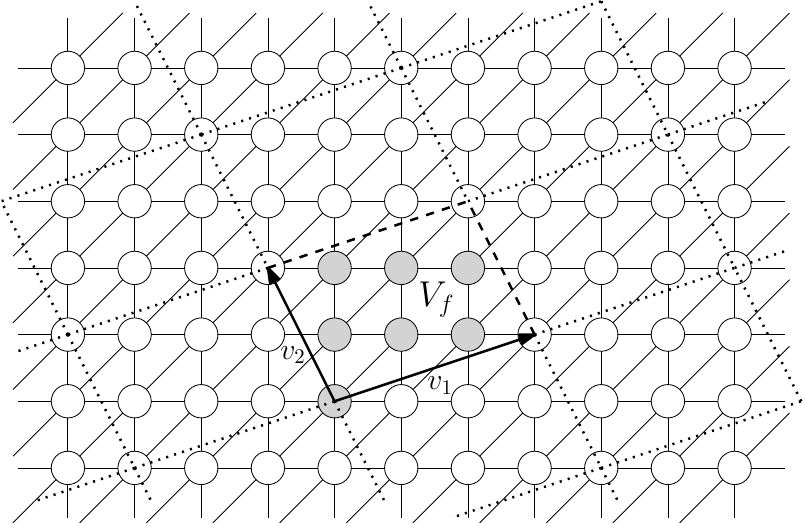}
    \caption{Illustrative example complementing the proof of Theorem~\ref{t:periodic-is-perfect}. We depict a triangular grid with layout as in Remark~\ref{r:tri} in which we assume a periodic coloring given by two linearly independent vectors $v_1 = (3,1)^\top, v_2  = (-1,2)^\top$ (Definition~\ref{d:square}). The two vectors form a parallelogram. The vertices in the parallelogram with the exception of the ones lying on the dashed sides form a finite structure by which we can tile the grid. The vertices -- in the proof of Theorem~\ref{t:periodic-is-perfect} denoted by $V_f$ -- are highlighted by a gray color.}
    \label{f:t:per:grid}    
    \end{subfigure}
    \hspace{2pt}
    \begin{subfigure}[b]{.3\textwidth}
   \includegraphics[width=\textwidth]{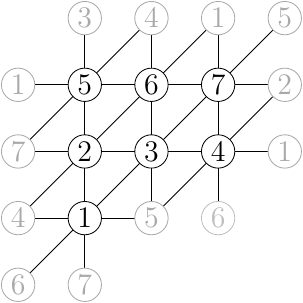}
    \caption{Illustrative example complementing the proof of Theorem~\ref{t:periodic-is-perfect}. The graph depicts the vertices $V_f$ together with their assigned coloring with seven colors. The gray vertices indicate the immediate neighborhood of the vertices $V_f$ together with the coloring given by the extension of the finite coloring.}
    \label{f:t:per:finite} 
    \end{subfigure}
    \caption{Graphs illustrating the proof of Theorem~\ref{t:periodic-is-perfect}.}
\end{figure}

The proof of Theorem~\ref{t:periodic-is-perfect} immediately yields that all periodic perfect colorings with a prescribed period are mergers of the constructed perfect coloring $ \Gamma $.

\begin{cor} \label{c:per-to-perfect}
Let $G_{k}$ be a square, or a triangular, or a hexagonal grid. Let $ v_{1}, v_{2} \in \mathbb{Z}^{2} $ be given, linearly independent and let $ \Gamma $ be the $ \mathbf{m} $-perfect coloring of $ G_{k} $ from the proof of Theorem~\ref{t:periodic-is-perfect}, then all periodic colorings of $ G_{k} $ with period given by $ v_{1}$, $ v_{2} $ which are perfect are mergers of $ \Gamma $.
\end{cor}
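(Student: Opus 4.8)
The plan is to show that the color classes of any $(v_1,v_2)$-periodic coloring are unions of the color classes of $\Gamma$, so that $\Gamma$ is the finest periodic coloring with the prescribed period and every other one factors through it. Recall from the construction in the proof of Theorem~\ref{t:periodic-is-perfect} that $\Gamma$ assigns a distinct color to each vertex of the fundamental domain $V_f$ and is then extended $(v_1,v_2)$-periodically; writing $L := \mathbb{Z}v_1 + \mathbb{Z}v_2$ for the translation lattice, this means precisely that $\Gamma(i) = \Gamma(j)$ holds if and only if $i - j \in L$. Thus the fibers of $\Gamma$ are exactly the $L$-orbits of $V$, and $V_f$ is a full set of orbit representatives.

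First I would fix an arbitrary $(v_1,v_2)$-periodic coloring $\Gamma'\colon V \to C'$ that is perfect (although, as noted below, perfectness plays no role in the merger conclusion). By Definition~\ref{d:square}, invariance of $\Gamma'$ under the translations $t_{v_1}$ and $t_{v_2}$ forces $\Gamma'$ to be constant on every coset of $L$; hence $i - j \in L$ implies $\Gamma'(i) = \Gamma'(j)$. Combined with the previous paragraph this yields the key implication $\Gamma(i) = \Gamma(j) \Rightarrow \Gamma'(i) = \Gamma'(j)$, i.e., the partition of $V$ into $\Gamma'$-color classes is coarser than the partition into $\Gamma$-color classes.

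With this comparison in hand the merging map is obtained by the standard quotient argument. Using the bijection between $C$ and $V_f$, I would define $\phi\colon C \to C'$ by $\phi(\Gamma(i_0)) := \Gamma'(i_0)$ for the unique representative $i_0 \in V_f$ of each color; the key implication guarantees this is well defined. To check $\Gamma' = \phi \circ \Gamma$ on all of $V$, take any $i \in V$, let $i_0 \in V_f$ be its orbit representative, and use $\Gamma(i) = \Gamma(i_0)$ together with $\Gamma'(i) = \Gamma'(i_0)$ to get $\phi(\Gamma(i)) = \Gamma'(i_0) = \Gamma'(i)$. Surjectivity of $\phi$ follows from surjectivity of $\Gamma'$ onto $C'$: any color of $\Gamma'$ is attained at some vertex, hence at its representative in $V_f$, and is therefore in the image of $\phi$. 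This exhibits $\Gamma'$ as a merger of $\Gamma$ in the sense of Definition~\ref{d:split-and-merger} (after the cosmetic identification of $C'$ with a subset of $C$ via a section of $\phi$; when $|C'| = |C|$ the two colorings coincide up to a renaming of colors).

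I do not expect a genuine obstacle here: the entire content is the factorization of one periodic coloring through the maximally refined periodic coloring $\Gamma$, which is routine once the fibers of $\Gamma$ are identified with the $L$-orbits. The only point requiring care is that this identification uses the specific construction of $\Gamma$ from the proof of Theorem~\ref{t:periodic-is-perfect} -- namely that distinct vertices of $V_f$ receive distinct colors -- rather than mere perfectness of $\Gamma$. It is worth recording that perfectness of $\Gamma'$ is not used in producing $\phi$; its role is only to let Lemma~\ref{l:perfect-merger} supply the explicit relation between the coloring matrices of $\Gamma'$ and $\Gamma$ afterwards.
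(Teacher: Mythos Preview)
Your argument is correct and follows the same idea the paper intends: the coloring $\Gamma$ from the proof of Theorem~\ref{t:periodic-is-perfect} assigns distinct colors to the vertices of the fundamental domain $V_f$ and is extended by the translation lattice $L=\mathbb{Z}v_1+\mathbb{Z}v_2$, so its fibers are precisely the $L$-orbits; any $(v_1,v_2)$-periodic coloring is constant on these orbits and therefore factors through $\Gamma$. The paper states this only as an immediate consequence without writing out the details, so your version is simply a careful unpacking of the same observation --- including the useful remark that perfectness of $\Gamma'$ is irrelevant for the factorization itself and the handling of the boundary case $|C'|=|C|$ vis-\`a-vis Definition~\ref{d:split-and-merger}.
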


We illustrate Corollary~\ref{c:per-to-perfect} in the following example.

\begin{ex}\label{ex:periodic-is-perfect}
   Let us have a four-color perfect coloring $\Gamma$ of a square grid with the matrix
   \begin{equation}\label{e:ex:perfect-is-periodic-matrix}
   \mathbf{m} =
   \begin{bmatrix}
       0 & 2 & 2 & 0 \\
       2 & 0 & 0 & 2 \\
       2 & 0 & 0 & 2 \\ 
       0 & 2 & 2 & 0
   \end{bmatrix}, 
   \end{equation}
   and with the periodic layout from Figure~\ref{f:ex:isomorphic-is-periodic-isom}, i.e., the linearly independent vectors defining the periodicity are $v_1=(2,0)$ and $v_2=(0,2)$. 

    There is exactly one $4$-color $(v_1,v_2)$-periodic coloring which is the coloring $\Gamma$. Other $4$-color $(v_1,v_2)$-periodic colorings can be obtained by grid automorphisms or color permutations. 
    There are two unique $(v_1,v_2)$-periodic colorings denoted by $\widetilde\Gamma_1$, $\widetilde\Gamma_2$ with three colors; other $(v_1,v_2)$-periodic colorings can be obtained through grid automorphisms or color permutations.  
    There are three unique $(v_1,v_2)$-periodic colorings denoted by $\widetilde\Gamma_3$, $\widetilde\Gamma_4$, and $\widetilde\Gamma_5$ with two colors; again other two-color $(v_1,v_2)$-periodic colorings can be obtained through grid automorphisms or color permutations. 
    There is a unique monochromatic coloring with respect to color permutations.
    
  The colorings $\widetilde\Gamma_s$, $s = 1, \ldots,5$, possess the property that $\widetilde\Gamma_s(i) \neq \widetilde\Gamma_s(j)$ implies $\Gamma(i) \neq \Gamma(j)$. Let us define each of $\widetilde\Gamma_s$ by a function $\phi_s : C \to \tilde{C}$ ($ C = \{ 1, 2, 3, 4 \} $ and $ \tilde{C} = \{ \mathrm{white}, \mathrm{black} \} $, or $ \tilde{C} = \{ \mathrm{white}, \mathrm{gray}, \mathrm{black} \} $) such that $\widetilde\Gamma_s = \phi_s \, \circ \Gamma$, specifically:
   \begin{align}\label{e:ex:perfect-is-periodic}
        \begin{array}{rllll}
       \phi_1: & 1 \mapsto \text{black}, & 2 \mapsto \text{white}, & 3 \mapsto \text{gray}, & 4 \mapsto \text{black},\\  
       \phi_2: & 1 \mapsto \text{black}, & 2 \mapsto \text{white}, & 3 \mapsto \text{gray}, & 4 \mapsto \text{white},\\  
       \phi_3: & 1 \mapsto \text{black}, & 2 \mapsto \text{black}, & 3 \mapsto \text{black}, & 4 \mapsto \text{white},\\  
       \phi_4: & 1 \mapsto \text{black}, & 2 \mapsto \text{black}, & 3 \mapsto \text{white}, & 4 \mapsto \text{white},\\  
       \phi_5: & 1 \mapsto \text{black}, & 2 \mapsto \text{white}, & 3 \mapsto \text{white}, & 4 \mapsto \text{black},
       \end{array}
   \end{align}
see Figure~\ref{f:ex:perfect-is-periodic-per}. There are several interesting remarks and connections:
\begin{enumerate}[label=\itshape{(\roman*)}]
\item For the given perfect periodic coloring $ \Gamma $, we are able to find multiple periodic colorings $ \widetilde{\Gamma}_{s} $, $ s = 1, \ldots, 5 $, with less colors.
\item The colorings $\widetilde{\Gamma}_1, \widetilde{\Gamma}_4, \widetilde{\Gamma}_5$ are perfect themselves. Their matrices $\mathbf{m}_1, \mathbf{m}_4, \mathbf{m}_5$ can be obtained via~\eqref{e:perfect-merger} using $\phi_1, \phi_4$, and $\phi_5$, respectively. 
\item Some periodic colorings (namely, $\widetilde{\Gamma}_2, \widetilde{\Gamma}_3$) need not be perfect themselves. However, Theorem~\ref{t:periodic-is-perfect} presents a way how to represent them as perfect colorings with more colors.
\end{enumerate}
   \begin{figure}
       \centering
       \begin{subfigure}[b]{.3\textwidth}
       \includegraphics[width=\linewidth]{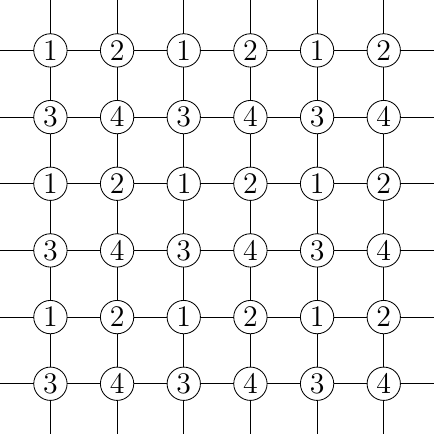}
       \caption{Perfect layout $\Gamma$ of periodic coloring.}
       \label{f:ex:isomorphic-is-periodic-isom}
       \end{subfigure}       
       \hspace{5pt}
       \begin{subfigure}[b]{.3\textwidth}
          \includegraphics[width = \textwidth]{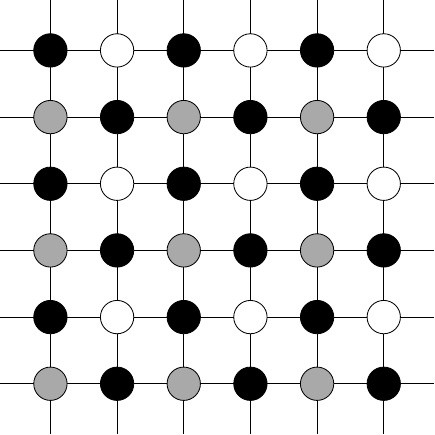}
          \caption{Periodic coloring $\widetilde\Gamma_1$.}
          \label{f:patt-ex-periodic-is-isomorphic-per-1}
       \end{subfigure}       
       \hspace{5pt}
       \begin{subfigure}[b]{.3\textwidth}
          \includegraphics[width = \textwidth]{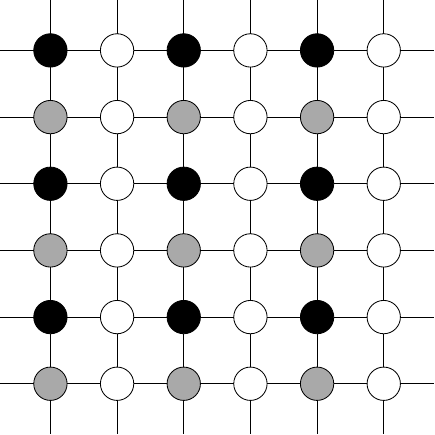}
          \caption{Periodic coloring $\widetilde\Gamma_2$.}
          \label{f:patt-ex-periodic-is-isomorphic-per-2}
       \end{subfigure}
        \hspace{5pt}

       \begin{subfigure}[b]{.3\textwidth}
          \includegraphics[width = \textwidth]{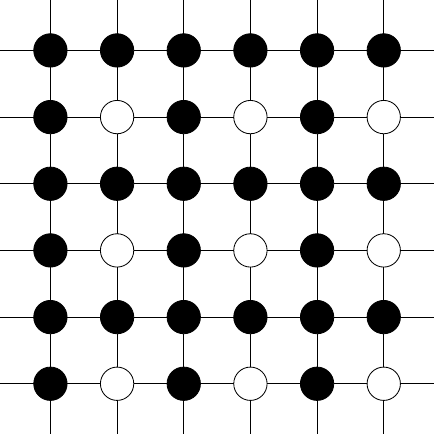}
          \caption{Periodic coloring $\widetilde\Gamma_3$.}
          \label{f:patt-ex-periodic-is-isomorphic-per-3}
       \end{subfigure}
        \hspace{5pt}
       \begin{subfigure}[b]{.3\textwidth}
          \includegraphics[width = \textwidth]{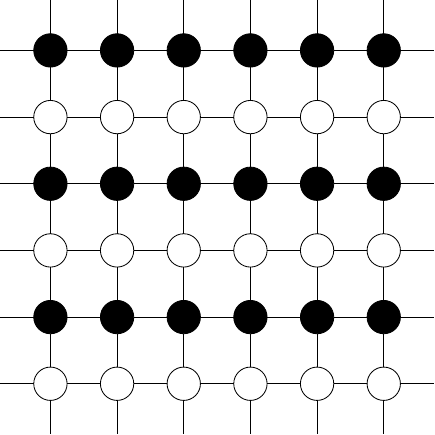}
          \caption{Periodic coloring $\widetilde\Gamma_4$.}
          \label{f:patt-ex-periodic-is-isomorphic-per-4}
       \end{subfigure}
       \hspace{5pt}
       \begin{subfigure}[b]{.3\textwidth}
          \includegraphics[width = \textwidth]{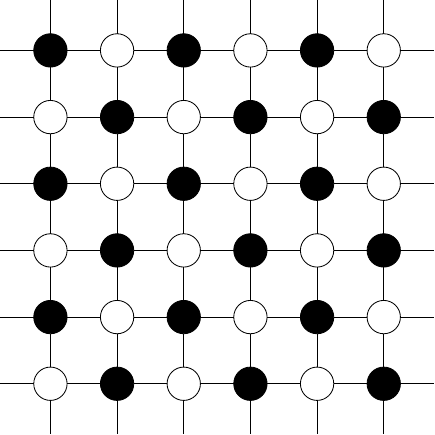}
          \caption{Periodic coloring $\widetilde\Gamma_5$.}
          \label{f:patt-ex-periodic-is-isomorphic-per-5}
       \end{subfigure}
       \caption{The perfect coloring $\Gamma$ and periodic colorings from Example~\ref{ex:periodic-is-perfect} obtained via the $\widetilde\Gamma_s = \phi_s \, \circ \, \Gamma$ in which $\phi_s$ is defined in~\eqref{e:ex:perfect-is-periodic} and the perfect coloring $\Gamma$ has the matrix~\eqref{e:ex:perfect-is-periodic-matrix} with the layout as in Figure~\ref{f:ex:isomorphic-is-periodic-isom}.}
       \label{f:ex:perfect-is-periodic-per}
   \end{figure}
\end{ex}

Focusing on the third remark in the preceding Example~\ref{ex:periodic-is-perfect}, we revealed that periodic coloring of a grid need not be perfect itself. However, considering periodic \emph{stationary solutions} (when periodicity of a solution on the grid is defined analogically as the periodicity of a coloring, see Definition~\ref{d:square}) we see that each periodic stationary solution of~\eqref{e:GDE} on square/triangular/hexagonal grid is necessarily perfect as well.

\begin{thm}\label{t:periodic-is-perfect-sol}
    Let $G_{k}$ be a square, or a triangular, or a hexagonal grid, and let~\eqref{e:GDE} be defined on $G_{k}$. Then for each linearly independent $v_1, v_2 \in \mathbb{Z}^2$ (satisfying~\eqref{e:hex:per} provided $G_{k}$ is a hexagonal grid) there exists a matrix $\mathbf{m} \in \mathbb{N}_0^{n \times n}$ such that every $(v_1,v_2)$-periodic stationary solution of~\eqref{e:GDE} (generalize Definition~\ref{d:square}) is $\mathbf{m}$-perfect. 
\end{thm}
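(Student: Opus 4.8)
The plan is to reduce the claim to the construction already carried out in the proof of Theorem~\ref{t:periodic-is-perfect}, which for a fixed period lattice produces a \emph{single} perfect coloring whose color classes are precisely the orbits of that lattice. First I would fix linearly independent $v_1, v_2 \in \mathbb{Z}^2$ (satisfying~\eqref{e:hex:per} in the hexagonal case) and let $\Gamma: V \to C$ be the perfect coloring built in the proof of Theorem~\ref{t:periodic-is-perfect} by assigning a \emph{distinct} color to every vertex of the fundamental domain $V_f$ and extending it periodically over the parallelogram tiling. Let $\mathbf{m} \in \mathbb{N}_0^{n \times n}$ with $n = |V_f|$ be its coloring matrix. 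The crucial feature is that $\Gamma$ and $\mathbf{m}$ depend only on the pair $(v_1,v_2)$ and not on any particular coloring or solution; by construction $\Gamma(i) = \Gamma(j)$ holds if and only if $i$ and $j$ differ by an integer combination of $v_1$ and $v_2$.

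Next I would take an arbitrary $(v_1, v_2)$-periodic stationary solution $u$ of~\eqref{e:GDE}, with periodicity understood as in Definition~\ref{d:square} generalized to solutions. Since $u$ is invariant under translation by $v_1$ and $v_2$, any two vertices lying in the same period-lattice orbit carry the same value; combined with the characterization of $\Gamma$ above, this yields the implication $\Gamma(i) = \Gamma(j) \implies u_i = u_j$. This lets me define $v: C \to \mathrm{Im}(u)$ by $v_c := u_i$ for an arbitrary $i$ with $\Gamma(i) = c$. Well-definedness is exactly the preceding implication, and surjectivity follows because every value $\alpha \in \mathrm{Im}(u)$ is attained at some vertex whose color $c$ then satisfies $v_c = \alpha$. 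By construction $v_{\Gamma(i)} = u_i$ for all $i \in V$.

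Finally, since $u$ is by hypothesis a stationary solution of~\eqref{e:GDE}, the coloring $\Gamma$ is $\mathbf{m}$-perfect, and $v$ is a surjection with $v_{\Gamma(i)} = u_i$, Definition~\ref{d:perfect-solution} immediately gives that $u$ is an $\mathbf{m}$-perfect stationary solution. As the \emph{same} $\mathbf{m}$ serves every $(v_1, v_2)$-periodic solution, the statement follows. One may additionally record, via Theorem~\ref{t:main}, that $v$ then solves the finite algebraic system~\eqref{e:finite-stationary}, so $u$ is consistently captured by a finite system.

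I expect the only genuinely nontrivial ingredient to be the uniformity of $\mathbf{m}$: the theorem asserts that a single matrix works for \emph{all} $(v_1, v_2)$-periodic solutions simultaneously, and this hinges on choosing the finest (all-distinct-colors) perfect coloring from Theorem~\ref{t:periodic-is-perfect} rather than a solution-specific one. Everything else is a routine unwinding of Definition~\ref{d:perfect-solution}. Alternatively, one could phrase the argument through Corollary~\ref{c:per-to-perfect}, viewing $u$ as a merger of $\Gamma$ and invoking Lemma~\ref{l:ss-merger}, but the direct construction above is more transparent.
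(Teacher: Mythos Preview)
Your proposal is correct and follows essentially the same approach as the paper: both invoke the fundamental-domain perfect coloring $\Gamma$ built in Theorem~\ref{t:periodic-is-perfect}, observe that periodicity of $u$ makes it constant on color classes so that a surjection $v:C\to\mathrm{Im}(u)$ with $v_{\Gamma(i)}=u_i$ exists, and conclude via Definition~\ref{d:perfect-solution}. The paper's proof is considerably terser, but you have simply filled in the well-definedness and surjectivity details and made the uniformity of $\mathbf{m}$ explicit.
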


\begin{proof}
The construction in the proof of Theorem~\ref{t:periodic-is-perfect} assigns each admissible pair of linearly independent vectors $v_1, v_2 \in \mathbb{Z}^2$ a perfect coloring $\Gamma$ with colors $C$ and the matrix $\mathbf{m}$. There additionally exists a surjection $v : C \to \mathrm{Im}(u)$ such that 
\[
v_{\Gamma(i)} = u_i \quad \text{for every} \quad i \in V.
\]
This follows from the consequence of Theorem~\ref{t:periodic-is-perfect}. Thus, $u$ is $\mathbf{m}$-perfect by Definition~\ref{d:perfect-solution}.
\end{proof}

\begin{rem}\label{r:periodic-is-perfect-sol}
    The choice of $\mathbf{m}$ in the proof of Theorems~\ref{t:periodic-is-perfect} and~\ref{t:periodic-is-perfect-sol} is not necessarily optimal. Let the assumptions of Theorem~\ref{t:periodic-is-perfect-sol} hold and let $u$ be a $(v_1,v_2)$-periodic solution. If there exists a set of colors $\widetilde{C}$ and a one-to-one map $v: \widetilde{C} \to \mathrm{Im}(u)$ such that the coloring $\widetilde{\Gamma}: V \to \widetilde{C}$ defined by $\widetilde{\Gamma}(i) = v^{-1}(u_i)$ is $\widetilde{\mathbf{m}}$-perfect, then naturally $u$ is also $\widetilde{\mathbf{m}}$-perfect. It however follows from the construction in the proof of Theorem~\ref{t:periodic-is-perfect} that $\widetilde{\Gamma}$ is a merger of $\Gamma$ and Lemmas~\ref{l:perfect-merger} and~\ref{l:ss-merger} apply. 
    
    Subsequently, if one wants to study $(v_1,v_2)$-periodic stationary solutions, it is beneficial to examine the finite systems~\eqref{e:finite-stationary} with $\widetilde{\mathbf{m}}$ being the matrices of the mergers $\widetilde{\Gamma}$ of $\Gamma$ -- $\Gamma$ originating from the vectors $v_1, v_2$. See Section~\ref{s:application}.
\end{rem}
Motivated by the preceding statements and examples, we discuss in the rest of this section the following issues:
\textit{
\begin{enumerate}[label=\arabic*.]
\item We study a possible number of non-equivalent perfect colorings of a given regular two-dimensional grid. In particular, we show that there can be an uncountable number of distinct $\mathbf{m}$-perfect colorings of the grid by two colors.
\item Theorem~\ref{t:periodic-is-perfect}, Corollary~\ref{c:per-to-perfect}, and Example~\ref{ex:periodic-is-perfect} show that every periodic coloring of a given regular two-dimensional grid is perfect itself or it can be represented by a perfect periodic coloring with more colors. We also discuss the contrary, if there exists (and under what conditions) a perfect coloring which is not periodic.
\end{enumerate}
}

To start with the number of perfect colorings, we show that even for two colors (i.e., for $ \mathbf{m} \in \mathbb{N}_{0}^{2 \times 2}$) there can generally be uncountable many non-equivalent perfect colorings of an underlying two-dimensional grid. Let us firstly precisely specify what colorings are equivalent.

\begin{defin}\label{d:equivalent-colorings}
    Let $G_{k}$ be a square, or a triangular, or a hexagonal grid and let $\Gamma_1$ and $\Gamma_2$ be its two colorings. The colorings $\Gamma_1$, $\Gamma_2$ are called \textit{equivalent} if there exists an automorphism of the grid $\varphi: V \to V$ such that 
    \[
    \Gamma_1(i) = \Gamma_2(\varphi(i))
    \]
    holds for all $i \in V$. The colorings $\Gamma_1$, $\Gamma_2$ are \textit{non-equivalent} if they are not equivalent. 
\end{defin}

\subsubsection{Square grid}

The square grid is a 4-regular graph, $k=4$. Two-color perfect colorings, $n=2$, have been thoroughly examined in~\cite{axenovichMultipleCoveringsInfinite2003}. We summarize the results in the next theorem in which we describe the $\mathbf{m}$-perfect colorings by a two-dimensional vector $(m_{11},m_{22})$ according to the following key
\[
\mathbf{m} = 
\begin{bmatrix}  
m_{11} & m_{12} \\
m_{21} & m_{22}
\end{bmatrix} 
= 
\begin{bmatrix}
m_{11} & 4-m_{11} \\
4-m_{22} & m_{22}
\end{bmatrix},
\]
where $ m_{11} $ represents the number of alike neighbors of a vertex with the first color and $ m_{22} $ represents the number of alike neighbors of a vertex with the second color.

\begin{thm}[\cite{axenovichMultipleCoveringsInfinite2003}]\label{t:square:grid}
    Let $G_{4}$ be a square grid and assume $n=2$. There generally exists an uncountable number of non-equivalent two-color perfect colorings on the square grid. The specific number of non-equivalent colorings depends on a particular form of the matrix $\mathbf{m}$ in the following way:
\begin{center}
\begin{NiceTabular}{|w{c}{1.1cm}|c|c|c|c|}\hline
\rule[0mm]{5mm}{0cm}
\diagbox{$m_{11}$}{$m_{22}$}&
\Block{}{$0$}&\Block{}{$1$}&\Block{}{$2$}&\Block{}{$3$}\\
\hline
$0$   & $\exists!$  & $\nexists$  & $\exists!$  & $\exists!$  \\
\hline
$1$   & ``        & $\exists!$  & $\exists!$  & $\infty$  \\
\hline
$2$   & ``        & ``        & $\infty$    & $2 \times \exists$  \\
\hline
$3$   & ``        & ``        & ``        & $\exists!$           \\
\hline

\hline
\end{NiceTabular}
\end{center}
in which the symbol $\nexists$ means that the corresponding coloring does not exist, $\exists!$ indicates the existence of precisely one coloring with respect to the automorphisms of the grid, $ 2 \times \exists$ means that there exist two non-equivalent colorings, and $\infty$ stands for infinite number of pairwise non-equivalent colorings.

In particular:
\begin{enumerate}[label=\arabic*.]
    \item The table is symmetric, i.e., the colors can be exchanged.
    \item The infinite number of colorings actually indicates uncountable number of colorings.  
    \item If $m_{11}=4$ or $m_{22}=4$, then there exists only a monochromatic coloring with the respective color.
\end{enumerate}
\end{thm}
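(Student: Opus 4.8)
The plan is to treat the classification as a finite case analysis over the admissible pairs $(m_{11},m_{22})$, exploiting three reductions before any real work. First, relabelling the two colours swaps $m_{11}\leftrightarrow m_{22}$ and carries a coloring to an equivalent one, so the table must be symmetric and it suffices to analyse the cells with $m_{11}\le m_{22}$. Second, if $m_{11}=4$ then every first-colour vertex has all four neighbours of its own colour, which propagates across the connected grid to the monochromatic coloring; since Lemma~\ref{l:symmetry} forces $m_{11}=4\iff m_{22}=4$, this disposes of the whole boundary row and column. Third, for $m_{11},m_{22}\in\{0,1,2,3\}$ all off-diagonal entries are positive, so Lemma~\ref{l:symmetry} is automatically satisfied and yields no further obstruction; the remaining ten cells must be decided one by one.

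The two tools I would use throughout are a local propagation argument and a window-counting argument. Reading Definition~\ref{d:perfect-coloring} as a nearest-neighbour constraint turns an $\mathbf m$-perfect coloring into a point of a subshift of finite type on $\{1,2\}^{\mathbb Z^2}$: once the colours on a small seed are fixed, the constraint frequently forces the colours of ever larger neighbourhoods. I would use this both to derive contradictions (nonexistence cells) and to establish rigidity (the $\exists!$ cells). Complementarily, a double count of the bichromatic edges across a large $N\times N$ window forces the colour densities to satisfy $\rho_1(4-m_{11})=\rho_2(4-m_{22})$ with $\rho_1+\rho_2=1$; this fixes the asymptotic densities (e.g.\ $\rho_1=\tfrac15$ for $(0,3)$, $\rho_1=\tfrac14$ for $(1,3)$, $\rho_1=\rho_2=\tfrac12$ for $(2,2)$ and $(3,3)$) and, in the uniqueness cells, helps pin down the orientation of the monochromatic interfaces.

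For the rigidity cells I would exhibit an explicit periodic pattern and then show the local rule admits no other global completion up to automorphism: the checkerboard for $(0,0)$; the efficient dominating (perfect) code $\{(x,y):2x+y\equiv 0\ (\mathrm{mod}\ 5)\}$ for $(0,3)$, in which every vertex has exactly one first-colour neighbour; a single-period diagonal/brick pattern for $(0,2),(1,1),(1,2)$; and the unique bichromatic-perfect-matching pattern for $(3,3)$. The nonexistence of $(0,1)$ is a short local contradiction I would spell out: here the first colour is independent and the second colour pairs into dominoes, so fixing a second-colour domino $\{(0,0),(1,0)\}$ forces $(0,1)$ to the first colour (the only second-colour neighbour of $(0,0)$ is its partner $(1,0)$); then all neighbours of $(0,1)$ are second-coloured because $m_{11}=0$, forcing $(1,1)$ to the second colour; but $(1,1)$ is also a neighbour of $(1,0)$ other than its partner and is therefore first-coloured, a contradiction.

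The genuinely hard cells are the uncountable ones, $(1,3)$ and $(2,2)$, together with the ``exactly two'' cell $(2,3)$, since here one must both construct a large family and prove it \emph{complete} and pairwise non-equivalent (Definition~\ref{d:equivalent-colorings}). For $(1,3)$ I would realise the first colour as horizontal dominoes stacked row by row, with consecutive rows linked by the local rule (every vertex has exactly one first-colour neighbour) only up to a single binary choice, so the family is indexed by a bi-infinite binary sequence. For $(2,2)$ each colour class induces a $2$-regular subgraph (a disjoint union of cycles and bi-infinite paths), and the freedom lies in the independent shapes of these monochromatic staircase interfaces. In both cases non-equivalence would follow by extracting a translation-invariant feature of the sequence of interface positions that the grid's finite local automorphisms cannot erase, so distinct aperiodic sequences give non-equivalent colorings and the family is uncountable. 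The main obstacle, where I expect the real work to lie, is the completeness direction: proving that every $\mathbf m$-perfect coloring in these cells belongs to the constructed family, i.e.\ that the local constraint forces the global stacked/striped structure with no exotic solutions; the $(2,3)$ count is then obtained by pushing this rigidity analysis to its end and finding that exactly two orientations of the forced pattern survive up to automorphism. Since the full classification is carried out in~\cite{axenovichMultipleCoveringsInfinite2003}, I would cite it for the delicate completeness steps and reproduce only the constructions and the short nonexistence arguments.
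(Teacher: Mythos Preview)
The paper does not prove this theorem at all: it is stated purely as a summary of results from~\cite{axenovichMultipleCoveringsInfinite2003}, with no argument supplied beyond the citation. Your proposal therefore goes well beyond what the paper offers, sketching an actual proof where the paper has none.

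As a sketch, your outline is sound. The symmetry reduction, the monochromatic boundary case via connectedness, the edge-counting density identity $\rho_1(4-m_{11})=\rho_2(4-m_{22})$, and the local contradiction for $(0,1)$ are all correct and cleanly argued. The explicit patterns you name for the $\exists!$ cells are the standard ones, and your description of the one-bit-per-row (resp.\ per-interface) freedom in the $(1,3)$ and $(2,2)$ families is exactly the mechanism that produces uncountably many non-equivalent colorings. You are also right to identify the completeness direction---that every $\mathbf m$-perfect coloring in those cells must fall into the constructed family---as the genuinely delicate step, and your decision to cite~\cite{axenovichMultipleCoveringsInfinite2003} for it is precisely what the paper does, only the paper invokes the citation at the outset rather than after the reductions. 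In short: your approach is strictly more detailed than the paper's, and the parts you spell out are correct.
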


To discuss the second question on the existence of an aperiodic perfect coloring of the square grid, assume a general number of colors $n \ge 2$. We have a general necessary and sufficient condition on the matrix $\mathbf{m}$ to guarantee the existence of the aperiodic $\mathbf{m}$-perfect coloring.

\begin{thm}(\cite[Theorem 1]{puzyninaPeriodicityGeneralizedTwodimensional2009}) \label{t:rectangular-det}
Let $ n \in \mathbb{N} $ and $\mathbf{m} \in \mathbb{N}_{0}^{n \times n}$ be such that $\sum_{j=1}^n m_{ij}=4$ holds for all $i = 1,2, \ldots, n$. There exists an aperiodic $\mathbf{m}$-perfect coloring $\Gamma$ of a rectangular grid if and only if $\det(\mathbf{m}) = 0$. 
\end{thm}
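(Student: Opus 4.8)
The plan is to prove the two implications separately, in each case reducing the two-dimensional problem to the interplay of the colour sequences along the rows and columns of the grid. Throughout I write $\Gamma(x,y)$ for the colour of the vertex $(x,y)$ and use that $\det(\mathbf{m})=0$ is equivalent to the existence of a nonzero $w\in\mathbb{R}^{n}$ with $\mathbf{m}w=0$. The algebraic heart of the statement is the following observation: if $\mathbf{m}w=0$ and $\Gamma$ is $\mathbf{m}$-perfect, then the finite-valued function $\phi(v):=w_{\Gamma(v)}$ satisfies $\sum_{u\in N(v)}\phi(u)=\sum_{j}m_{\Gamma(v)j}w_{j}=0$ at every vertex $v$, i.e.\ $\phi$ is a bounded eigenfunction of the grid adjacency operator for the eigenvalue $0$. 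Degeneracy of $\mathbf{m}$ thus forces the colouring to carry a nontrivial ``harmonic'' structure, and it is this structure that both permits aperiodic arrangements and is absent when $\mathbf{m}$ is invertible.

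For the implication $\det(\mathbf{m})=0\Rightarrow$ (an aperiodic colouring exists) I would give a direct construction, deforming an already available periodic $\mathbf{m}$-perfect colouring. The degeneracy supplies a one-parameter family of admissible local reconfigurations: in the singular case such a periodic colouring can be written down explicitly (for $n=2$ the matrix $(m_{11},m_{22})=(2,2)$ realises it, each colour class inducing a $2$-regular subgraph, i.e.\ a disjoint union of bi-infinite paths and cycles, which one may take to be freely placed monotone lattice paths), and the kernel direction of $\mathbf{m}$ lets one modify a prescribed selection of these paths independently without altering the neighbourhood multiset of any vertex. Concatenating these independent choices according to a non-eventually-periodic sequence produces an $\mathbf{m}$-perfect colouring invariant under no nonzero translation, i.e.\ an aperiodic one.

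For the converse $\det(\mathbf{m})\neq 0\Rightarrow$ (every $\mathbf{m}$-perfect colouring is periodic) I would argue by rigidity. Reading the colouring column by column, the perfect-colouring identity couples three consecutive columns $c_{x-1},c_{x},c_{x+1}$: the two vertical neighbours inside column $x$ together with the two horizontal neighbours in columns $x\pm1$ must assemble into the row $m_{\Gamma(x,y)\,\cdot}$ at every height $y$. I would encode this as a transfer relation between adjacent columns whose associated linear map is governed by $\mathbf{m}$, so that invertibility of $\mathbf{m}$ removes the ambiguity in how the neighbours are distributed and makes the relation deterministic. Bounded local complexity then follows -- only finitely many types of a fixed-width strip can occur -- and a Morse--Hedlund/Nivat-type pigeonhole argument upgrades finite determinacy to genuine two-dimensional periodicity.

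The step I expect to be the main obstacle is making the transfer relation in the converse precise. The perfect-colouring condition fixes only the \emph{multiset} of each vertex's four neighbours, not the horizontal/vertical split, so a priori the column-to-column map is multivalued; the crux is to show that $\det(\mathbf{m})\neq 0$ rules out any residual freedom -- equivalently, that any local reconfiguration preserving all neighbourhood multisets must feed a nonzero vector into $\ker\mathbf{m}$, which is trivial in the nonsingular case. Isolating this as a lemma relating admissible reconfigurations to $\ker\mathbf{m}$, and then carrying out the periodicity argument uniformly in the vertical direction, is where the real work lies.
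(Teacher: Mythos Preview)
The paper does not give its own proof of this theorem; it is quoted from Puzynina~(2009), and the paper only remarks that her argument ``relies on the method of $R$-prolongable words'', with an alternative algebraic proof for two colours cited from Heikkil\"a--Herva--Kari. Your outline is therefore a genuinely different route, and the comparison is against those external methods rather than anything in the present paper.

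Your observation that a vector $w\in\ker\mathbf{m}$ turns $\phi(v)=w_{\Gamma(v)}$ into a bounded null-eigenfunction of the grid adjacency operator is correct and is indeed the algebraic core of the statement. Both implications in your sketch, however, have real gaps beyond the one you flag.

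For $\det\mathbf{m}=0\Rightarrow$ existence of an aperiodic colouring, your construction is carried out only for $n=2$ with $(m_{11},m_{22})=(2,2)$, where each colour class is a disjoint union of lattice paths that can be rerouted independently. For general $n$ and general singular $\mathbf{m}$ you would need (i) a periodic $\mathbf{m}$-perfect colouring to start from, and (ii) a concrete mechanism that converts a kernel vector $w$ into a local, neighbourhood-multiset-preserving switch on that colouring. Neither is supplied, and (ii) is not automatic: a real vector in $\ker\mathbf{m}$ does not by itself prescribe a combinatorial recolouring move.

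For the converse you correctly identify the obstacle, and it is genuine. The perfect-colouring condition at $(x,y)$ constrains only the \emph{multiset} of the four neighbours, so the column-to-column relation is second-order and multivalued; the assertion that invertibility of $\mathbf{m}$ collapses it to a deterministic transfer is precisely the content to be proved, not a consequence of linear algebra alone. Your proposed lemma (``every admissible local reconfiguration feeds a nonzero vector into $\ker\mathbf{m}$'') is the right target but is not established here. Puzynina closes this via $R$-prolongable words; the algebraic alternative encodes the colouring as a formal power series and studies its annihilator ideal. A further caution: the Morse--Hedlund step is one-dimensional, and its two-dimensional analogue (Nivat's conjecture) is open in general, so the pigeonhole upgrade from bounded local complexity to full two-periodicity cannot be invoked as a black box --- one needs the extra rigidity specific to perfect colourings, which is again what the $R$-prolongable or algebraic machinery provides.
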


The proof of Theorem~\ref{t:rectangular-det} in~\cite{puzyninaPeriodicityGeneralizedTwodimensional2009} relies on the method of $R$-prolongable words. Similar result for two colors can be found in~\cite{heikkilaPerfectCoveringsTwoDimensional2022} which utilizes algebraic argument. Unfortunately, the condition on the determinant does not provide a direct information about the number of perfect colorings in general. Nevertheless, at least for the case of two colors $(n=2)$ the non-periodicity seems to correspond to the existence of uncountable number of colorings, see Theorem~\ref{t:square:grid} and the entries corresponding to the singular matrices 
\begin{equation}\label{e:square-grid-matrices}
\mathbf{m}_1 = 
\begin{bmatrix}
1 & 3 \\
1 & 3
\end{bmatrix} , \quad  
\mathbf{m}_2 = 
\begin{bmatrix}  
2 & 2 \\
2 & 2
\end{bmatrix} , \quad 
\mathbf{m}_3 = 
\begin{bmatrix}
3 & 1 \\
3 & 1
\end{bmatrix} .
\end{equation}

\subsubsection{Triangular grid}
Triangular grid is a $6$-regular graph. The two-color perfect colorings were investigated in~\cite{vasilevaDistanceRegularColorings2014} and we summarize the results in the following theorem. Similarly to the case of the square grid we use a key 
\[
\mathbf{m} = \left[ 
\begin{array}{cc}  
m_{11} & m_{12} \\
m_{21} & m_{22}
\end{array} 
\right] = \left[ 
\begin{array}{cc}  
m_{11} & 6-m_{11} \\
6-m_{22} & m_{22}
\end{array} 
\right].
\]

\begin{thm}(\cite{vasilevaDistanceRegularColorings2014, vasilevaCompletelyRegularCodes2022})\label{t:triangular:grid}
Let $G_6$ be a triangular grid and assume $n=2$. There generally exists an uncountable number of non-equivalent two-color perfect colorings on the triangular grid. The specific number of non-equivalent colorings depends on a particular form of the matrix $\mathbf{m}$ in the following way:

\begin{center}
\begin{NiceTabular}{|w{c}{1.1cm}|c|c|c|c|c|c|}\hline
\rule[0mm]{5mm}{0cm}
\diagbox{$m_{11}$}{$m_{22}$}&
\Block{}{$0$}&\Block{}{$1$}&\Block{}{$2$}&\Block{}{$3$}&\Block{}{$4$}&\Block{}{$5$}\\
\hline
$0$&$\nexists$ &$\nexists $&$\nexists$&$\exists!$&$\infty$&$\exists!$\\
\hline
$1$&``&$\nexists$&$\nexists$& $\infty$ &$\exists!$&$\nexists$\\
\hline
$2$&``&``&$\infty$&$2 \times \exists$ & $2 \times \exists$ &$\nexists$\\
\hline
$3$& `` &``&``&$\exists!$&$\nexists$&$\nexists$\\
\hline
$4$& ``&``&``&`` &$\exists!$&$\nexists$\\
\hline
$5$& ``&``&``&``&`` &$\nexists$\\
\hline
\end{NiceTabular}
\end{center}
(see Theorem~\ref{t:square:grid} for the explanation of the symbols). 

In particular:
\begin{enumerate}
    \item The table is symmetric, i.e., the colors can be exchanged.
    \item The infinite number of colorings actually indicates uncountable number of colorings. 
    \item If $m_{11}=6$ or $m_{22}=6$, then there exists only a monochromatic coloring with the respective color.
\end{enumerate}
\end{thm}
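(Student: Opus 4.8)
The statement is a finite classification, so the plan is to fix the parametrisation $\mathbf{m} = \left[\begin{smallmatrix} m_{11} & 6-m_{11} \\ 6-m_{22} & m_{22}\end{smallmatrix}\right]$ and decide, for each of the finitely many pairs $(m_{11},m_{22}) \in \{0,\dots,6\}^2$, which verdict ($\nexists$, $\exists!$, $2\times\exists$, $\infty$, or monochromatic) applies. Since exchanging the two colours transposes $\mathbf{m}$, it suffices to treat $m_{11} \le m_{22}$, which is exactly the symmetry asserted in item~1; the monochromatic cases $m_{11}=6$ or $m_{22}=6$ are immediate from Lemma~\ref{l:symmetry}, which forces $m_{12}=0 \iff m_{21}=0$ and hence $m_{11}=6 \iff m_{22}=6$.

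For the non-existence entries I would assemble a small arsenal of necessary conditions and show each forbidden pair violates one. First, Lemma~\ref{l:symmetry} removes the sign-asymmetric combinations. Second, the equitable-partition density identity: if $\rho_1,\rho_2>0$ denote the colour densities then $\rho_1(6-m_{11}) = \rho_2(6-m_{22})$, which pins the ratio $\rho_1 : \rho_2 = (6-m_{22}):(6-m_{11})$. Third, and most importantly, the $3$-clique structure of the triangular grid (two triangles per vertex, so $2V$ triangles in total): every triangle contains a monochromatic edge by pigeonhole, each monochromatic edge lies in two triangles, and the number of monochromatic edges of colour $c$ per vertex is $\tfrac12\rho_c m_{cc}$; comparing with the two triangles per vertex yields $\rho_1 m_{11} + \rho_2 m_{22} \ge 2$. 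Substituting the densities from the identity, this inequality eliminates the ``too homogeneous'' corner $(0,0),(0,1),(0,2),(1,1),(1,2)$. The same triangle analysis, read along a single edge (whose two common neighbours are the apices of its two triangles), rules out the ``too saturated'' cases: for instance a bichromatic edge can never sit inside a triangle when one colour is nearly complete, which excludes $(1,5),(2,5),(3,4),(3,5),(4,5)$ and $(5,5)$.

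For the positive entries I would proceed in two directions. On one hand, exhibit explicit periodic colourings realising each feasible $\mathbf{m}$ (stripe and sublattice patterns read off from the integer layout of Remark~\ref{r:tri}). On the other, prove the stated multiplicities. For the $\exists!$ entries the local rule propagates rigidly: fixing the colour of one vertex and of its neighbourhood forces, triangle by triangle, the colour of every further vertex, so the colouring is unique up to a grid automorphism; for the $2\times\exists$ entries the same propagation leaves exactly one genuine binary branch that no automorphism undoes. The uncountable entries $(0,4),(1,3),(2,2)$ are the heart of the theorem: here the perfect-colouring constraint decouples along a family of parallel lattice lines, leaving one independent binary (shift/flip) choice per line, which produces an injection from $\{0,1\}^{\mathbb{N}}$ into the set of $\mathbf{m}$-perfect colourings. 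Because the automorphism group of the triangular grid is countable (translations by $\mathbb{Z}^2$ together with the finite point group), every equivalence class in the sense of Definition~\ref{d:equivalent-colorings} is countable, so a continuum of colourings necessarily splits into a continuum of pairwise non-equivalent ones, justifying item~2.

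The main obstacle is not non-existence (once the triangle and density conditions are in place those cases are routine) but the exact counting in the delicate middle entries. Proving genuine uniqueness, and distinguishing $\exists!$ from $2\times\exists$, requires showing that the six-neighbour rule leaves no hidden degree of freedom — a careful finite analysis of how a fully coloured neighbourhood determines its surroundings. For the uncountable cases one must verify both that the construction yields a valid $\mathbf{m}$-perfect colouring for \emph{every} binary sequence (the per-line choices are truly independent) and that distinct sequences are not identified by an automorphism. This rigidity-versus-freedom dichotomy is exactly what is carried out in~\cite{vasilevaDistanceRegularColorings2014, vasilevaCompletelyRegularCodes2022}, whose arguments we follow.
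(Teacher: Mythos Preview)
The paper does not prove this theorem: it is stated as a citation result from \cite{vasilevaDistanceRegularColorings2014,vasilevaCompletelyRegularCodes2022}, followed only by a remark about a typo in the cited source (the $(1,3)$ entry) and a pointer to \cite{taranenkoMetricPropertyPerfect2021} for some subcases. There is therefore no ``paper's own proof'' to compare your proposal against; your sketch is a plausible outline of the strategy one finds in the cited references, and you yourself acknowledge this at the end.

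As a self-contained outline your plan is sound in structure but optimistic in places. The density identity and the triangle/pigeonhole inequality $\rho_1 m_{11}+\rho_2 m_{22}\ge 2$ do indeed dispose of the low corner $(0,0),(0,1),(0,2),(1,1),(1,2)$, and the common-neighbour contradiction along a bichromatic edge quickly kills $(5,5)$ and $(4,5)$. But several ``saturated'' non-existence entries, notably $(3,4)$, are not eliminated by either of your stated mechanisms (the triangle inequality gives $18/5\ge 2$, and the two common neighbours of a bichromatic edge are not immediately overdetermined); they require a longer propagation argument around the hexagonal link of a vertex, which you gloss as ``routine''. Similarly, distinguishing $\exists!$ from $2\times\exists$ in the middle of the table is where the real case analysis in \cite{vasilevaDistanceRegularColorings2014,vasilevaCompletelyRegularCodes2022} lies, and your proposal correctly flags this as the hard part without supplying it. Your argument for item~2 (countable automorphism group $\Rightarrow$ countable equivalence classes $\Rightarrow$ continuum many classes) is clean and correct.
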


There is a typo in the text of~\cite{vasilevaDistanceRegularColorings2014} claiming that in the case $(m_{11}, m_{22}) = (1,3)$ there is exactly one coloring. The existence of an infinite number of colorings in this case is later clarified in~\cite{vasilevaCompletelyRegularCodes2022}. We remark that the results for $m_{11}+m_{22} \leq 2$, and $m_{11}+m_{22} \geq 8$, $m_{11},m_{22} \neq 6$ -- with the exception of $(m_{11},m_{22}) \neq (4,4)$ -- can also be found in~\cite{taranenkoMetricPropertyPerfect2021}.

In the virtue of Theorem~\ref{t:rectangular-det} there is a claim connecting the aperiodicity of the coloring and the spectral properties of the matrix $\mathbf{m}$. 

\begin{thm}(\cite[Theorem 2]{puzyninaPeriodicityGeneralizedTwodimensional2009}) \label{t:triangular-det}
Let $ n \in \mathbb{N} $ and $\mathbf{m} \in \mathbb{N}_{0}^{n \times n}$ be such that $\sum_{j=1}^n m_{ij}=6$ holds for all $i = 1,2, \ldots, n$. There exists an aperiodic $\mathbf{m}$-perfect coloring $\Gamma$ of a triangular grid if and only if $\det(\mathbf{m}+2I)= 0$, in which $I$ is an identity matrix. 
\end{thm}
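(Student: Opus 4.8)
\emph{Strategy.} The plan is to adapt the method of $R$-prolongable words that underlies the square-grid result (Theorem~\ref{t:rectangular-det}), the new ingredient being a factorization of the associated difference operator that makes the passage from $\det(\mathbf{m})=0$ to $\det(\mathbf{m}+2I)=0$ transparent. I work in the integer embedding of Remark~\ref{r:tri}; up to a unimodular change of coordinates the three edge-directions may be taken to be $(1,0)$, $(0,1)$, $(1,1)$, so that the six neighbors of $(x,y)$ are $(x\pm1,y)$, $(x,y\pm1)$, $(x+1,y+1)$, $(x-1,y-1)$. For a vector $w\in\mathbb{R}^{n}$ indexed by colors and a coloring $\Gamma$, set $g(x,y):=w_{\Gamma(x,y)}$. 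If $\Gamma$ is $\mathbf{m}$-perfect and $\mathbf{m}w=\lambda w$, then summing the neighbor values and invoking Definition~\ref{d:perfect-coloring} gives the linear relation
\begin{equation}\label{e:prop:harm}
g(x+1,y)+g(x-1,y)+g(x,y+1)+g(x,y-1)+g(x+1,y+1)+g(x-1,y-1)=\lambda\,g(x,y).
\end{equation}
Writing $X,Y$ for the unit shifts $(Xg)(x,y)=g(x+1,y)$, $(Yg)(x,y)=g(x,y+1)$, relation~\eqref{e:prop:harm} reads $\Phi_{\lambda}(X,Y)\,g=0$ with symbol $\Phi_{\lambda}=X+X^{-1}+Y+Y^{-1}+XY+X^{-1}Y^{-1}-\lambda$.

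\emph{Why $+2I$.} The key algebraic observation is the factorization
\[
\Phi_{-2}(X,Y)=(X+1)(Y+1)\bigl(1+X^{-1}Y^{-1}\bigr),
\]
which mirrors the square-grid symbol at its own critical value, $X+X^{-1}+Y+Y^{-1}=(X+Y)\bigl(1+X^{-1}Y^{-1}\bigr)$. Both critical operators share the degenerate diagonal factor $1+X^{-1}Y^{-1}$, whose kernel $\{(-1)^{x}\sigma(x-y):\sigma\ \text{arbitrary}\}$ carries genuine one-dimensional freedom along the main diagonal. Hence $\lambda=-2$ is exactly the eigenvalue at which~\eqref{e:prop:harm} admits aperiodic bounded solutions, and the requirement that $-2$ be an eigenvalue of $\mathbf{m}$ is precisely $\det(\mathbf{m}+2I)=0$.

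\emph{The two directions.} For sufficiency ($\det(\mathbf{m}+2I)=0\Rightarrow$ an aperiodic coloring exists) I would take $w$ with $\mathbf{m}w=-2w$, encode an aperiodic binary sequence in the free diagonal parameter $\sigma$ above, and then realize this prescribed $w$-profile by an honest $\mathbf{m}$-perfect coloring, transporting the explicit $R$-prolongable construction of Theorem~\ref{t:rectangular-det} along the common diagonal direction; for the critical two-color matrices this amounts to inserting a freely chosen sequence of ``defect diagonals,'' yielding uncountably many non-equivalent aperiodic colorings (consistent with the $\infty$ entries of Theorem~\ref{t:triangular:grid}). For necessity I would argue the contrapositive: if $-2$ is not an eigenvalue of $\mathbf{m}$, then for every eigenvector $w$ the symbol $\Phi_{\lambda}$ is non-degenerate, its only bounded solutions are superpositions of plane waves $\xi^{x}\eta^{y}$ with $|\xi|=|\eta|=1$, and the finiteness of $\mathrm{Im}(g)$ forces these frequencies to be roots of unity, hence each $w_{\Gamma}$ is periodic; since the spectral data of the nonnegative, constant-row-sum matrix $\mathbf{m}$ encodes $\Gamma$, periodicity of every such $w_{\Gamma}$ propagates to periodicity of $\Gamma$ itself.

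\emph{Main obstacle.} The difficulty lies in the realizability step of sufficiency and the rigidity step of necessity. Relation~\eqref{e:prop:harm} controls only the real-valued shadow $w_{\Gamma}$ of the coloring, whereas $\Gamma$ must be a genuine map into $n$ colors whose neighborhoods match $\mathbf{m}$ exactly; bridging from a prescribed aperiodic harmonic profile to an actual perfect coloring (and, conversely, upgrading local non-degeneracy to global two-dimensional periodicity, where the spectral heuristic above is not literally valid for non-diagonalizable $\mathbf{m}$) is exactly where the combinatorial $R$-prolongable-word machinery of~\cite{puzyninaPeriodicityGeneralizedTwodimensional2009} must replace the linear-algebra sketch and carry the real weight of the argument.
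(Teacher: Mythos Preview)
The paper does not contain its own proof of this theorem: the statement is quoted verbatim from \cite[Theorem~2]{puzyninaPeriodicityGeneralizedTwodimensional2009}, and the only remark the paper makes is that the proof there ``relies on the method of $R$-prolongable words'' (stated explicitly after Theorem~\ref{t:rectangular-det} and implicitly reused here). There is therefore nothing in the present paper to compare your attempt against beyond that one-line description of method.

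That said, your sketch is consistent with the cited approach and your algebraic observation is genuinely illuminating: the factorization $\Phi_{-2}(X,Y)=(X+1)(Y+1)(1+X^{-1}Y^{-1})$ is correct and does explain cleanly why the critical eigenvalue shifts from $0$ (square grid) to $-2$ (triangular grid), via the shared diagonal factor $1+X^{-1}Y^{-1}$. This is a nice structural explanation that the present paper does not offer.

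However, as you yourself flag in the ``Main obstacle'' paragraph, what you have written is a strategy, not a proof. The necessity direction is particularly soft: the claim that bounded solutions of $\Phi_\lambda g=0$ with $\lambda\neq-2$ are superpositions of plane waves $\xi^x\eta^y$ with $|\xi|=|\eta|=1$, and that finite range then forces roots of unity, is a heuristic that does not survive as stated (solutions of constant-coefficient partial difference equations need not be finite exponential sums, and the passage from ``each eigenvector shadow $w_\Gamma$ is periodic'' to ``$\Gamma$ is periodic'' requires care when $\mathbf{m}$ is not diagonalizable or when eigenvectors do not separate colors). The sufficiency direction likewise needs the actual combinatorial construction, not just the linear kernel. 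You are right that the $R$-prolongable-word machinery of \cite{puzyninaPeriodicityGeneralizedTwodimensional2009} is where the real work happens; your contribution here is a clean motivation for the spectral condition, not a replacement for that machinery.
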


Although the proof of Theorem~\ref{t:triangular-det} does not provide a precise information about the number of existing colorings, there is an observation similar to the one in the case of the square grid. Namely, the matrices in the form
\[
\mathbf{m}_s = 
\begin{bmatrix}
    s & 6 - s \\
    2 + s & 4 - s 
\end{bmatrix}, \quad s = 0, 1, \ldots, 4,
\]
satisfy
\[
\det(\mathbf{m}_s + 2 I )= \det\left( 
\begin{bmatrix}
    2 + s & 6 - s \\
    2 + s & 6 - s
\end{bmatrix} \right) = 0, \quad s = 0, 1, \ldots, 4, 
\]
i.e., the condition from Theorem~\ref{t:triangular-det} (cf. the table in Theorem~\ref{t:triangular:grid}).  

\subsubsection{Hexagonal grid}
The hexagonal grid possesses slightly different qualities compared to other infinite graphs considered in this paper. Although it is a vertex transitive graph, it is not shift transitive since there are two types of qualitatively different vertices, see Remark~\ref{r:hex} and Figure~\ref{f:hex}. Since the hexagonal grid is a $3$-regular graph, the corresponding matrix $\mathbf{m}$ has necessarily the following form:
\[
\mathbf{m} = 
\begin{bmatrix}
m_{11} & m_{12} \\
m_{21} & m_{22}
\end{bmatrix} 
= 
\begin{bmatrix}
m_{11} & 3-m_{11} \\
3-m_{22} & m_{22}
\end{bmatrix}.
\]
The two color patterns were investigated in~\cite{avgustinovichCompletelyRegularCodes2016} in the context of perfect regular codes. We summarize the number of non-equivalent $\mathbf{m}$-perfect colorings of the hexagonal grid in Theorem~\ref{t:hexagonal:grid}.  
\begin{thm}(\cite{avgustinovichCompletelyRegularCodes2016})\label{t:hexagonal:grid}
Let $G_3$ be a hexagonal grid and assume $n=2$. There generally exists an uncountable number of non-equivalent two-color perfect colorings on the hexagonal grid. The specific number of non-equivalent colorings depends on a particular form of the matrix $\mathbf{m}$ in the following way:

\begin{center}
\begin{NiceTabular}{|w{c}{1.1cm}|c|c|c|}\hline
\rule[0mm]{5mm}{0cm}
\diagbox{$m_{11}$}{$m_{22}$}&
\Block{}{$0$}&\Block{}{$1$}&\Block{}{$2$}\\
\hline
$0$& $\exists!$&$\nexists$& $\infty$ \\
\hline
$1$&``&$\infty$ & $\exists!$\\
\hline
$2$& `` & `` &$\infty$\\
\hline
\end{NiceTabular}
\end{center}
(see Theorem~\ref{t:square:grid} for the explanation of the symbols). 

In particular:
\begin{enumerate}
    \item The table is symmetric, i.e., the colors can be exchanged.
    \item The infinite number of colorings actually indicates uncountable number of colorings. 
    \item If $m_{11}=3$ or $m_{22}=3$, then there exists only a monochromatic coloring with the respective color.
\end{enumerate}
\end{thm}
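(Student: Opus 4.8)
The plan is to first restrict the range of admissible matrices and then settle existence and cardinality case by case, always measuring colorings up to the equivalence of Definition~\ref{d:equivalent-colorings}. Since every row of $\mathbf m$ sums to $3$, one has $m_{12}=3-m_{11}$ and $m_{21}=3-m_{22}$, so $\mathbf m$ is determined by the pair $(m_{11},m_{22})$. The sign symmetry forced by Lemma~\ref{l:symmetry} gives $m_{12}=0\iff m_{21}=0$, i.e.\ $m_{11}=3\iff m_{22}=3$; in that situation connectedness of $G_3$ propagates a single color and only the monochromatic coloring survives, so it suffices to treat $m_{11},m_{22}\in\{0,1,2\}$, and the color swap yields the symmetry of the table, leaving the six cases with $m_{11}\le m_{22}$. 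For each such matrix I would reformulate the $\mathbf m$-perfect condition structurally: the monochromatic edges of color $i$ form an $m_{ii}$-regular subgraph on that class, while the bichromatic edges form a $(3-m_{ii})$-regular subgraph between the classes, and I would use throughout that $G_3$ is bipartite with girth $6$ (Remark~\ref{r:hex}).

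The two $\exists!$ entries and the single $\nexists$ entry I expect to be rigid. For $(0,0)$ the condition $m_{11}=m_{22}=0$ says no two like-colored vertices are adjacent, so $\Gamma$ is a proper $2$-coloring; as $G_3$ is connected and bipartite this is exactly the bipartition of Remark~\ref{r:hex}, unique up to the grid automorphism interchanging the two sublattices, whence $\exists!$. For $(1,2)$ I would show the pattern is forced: fixing the color of one vertex, the prescription of a precise neighbor multiset at each color propagates a unique extension over all of $G_3$ with no surviving binary branching, producing one periodic coloring up to automorphism. For the nonexistence at $(0,1)$, color $1$ is independent while color $2$ induces a perfect matching with each matched vertex carrying two color-$1$ neighbors; here the density identities are consistent, so I would instead trace the forced colors around a single hexagonal face (equivalently around a matched color-$2$ pair) and exhibit a purely combinatorial failure to close up.

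The substance lies in the three $\infty$ cases $(0,2)$, $(1,1)$, $(2,2)$. For each I would construct $2^{\aleph_0}$ pairwise distinct $\mathbf m$-perfect colorings by a stripe construction analogous to the singular square-grid matrices of~\eqref{e:square-grid-matrices}: decompose the grid into a family of parallel bi-infinite lines along which the local pattern is laid down, so that between consecutive lines an independent binary choice (a shift, or a color flip preserving every neighborhood multiset) is available. Encoding these choices by an arbitrary sequence $(\varepsilon_n)_{n\in\mathbb Z}\in\{0,1\}^{\mathbb Z}$ gives one coloring per sequence; verifying the $\mathbf m$-perfect rule reduces to a finite check at the interface between two adjacent lines, and distinct sequences give distinct colorings because each choice is locally readable from the coloring. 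For $(2,2)$ this can be phrased as choosing the bichromatic perfect matching (a dimer covering) from an uncountable family of compatible matchings, the complementary $2$-factor then being colored blockwise.

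Finally, uncountability of non-equivalent colorings follows by cardinality: $\mathrm{Aut}(G_3)$ is the countable group generated by a lattice of translations and a finite point group, so every equivalence class of Definition~\ref{d:equivalent-colorings} is countable, and having produced $2^{\aleph_0}$ distinct colorings forces at least $2^{\aleph_0}/\aleph_0$, hence uncountably many, classes. I expect the two main obstacles to be, first, designing for each singular matrix a line decomposition whose inter-line choice is genuinely free (so that all of $\{0,1\}^{\mathbb Z}$ is realized) while preserving the neighborhood multisets, and second, pinning down the local closing-up obstruction that excludes $(0,1)$; the rigidity arguments for the $\exists!$ cases and the countable-orbit bound are comparatively routine.
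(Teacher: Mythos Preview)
The paper does not give its own proof of this statement: Theorem~\ref{t:hexagonal:grid} is stated with a citation to~\cite{avgustinovichCompletelyRegularCodes2016} and no argument is supplied. So there is nothing in the paper to compare your proposal against; you are effectively offering to reprove a result that the authors import wholesale from the literature. Your case-by-case plan (reduce via Lemma~\ref{l:symmetry}, handle $(0,0)$ by bipartiteness, build stripe families for the singular matrices, and bound equivalence classes via the countability of $\mathrm{Aut}(G_3)$) is the natural strategy and is in the same spirit as the treatment in the cited source.

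One concrete caution on the $(0,1)$ case: a \emph{single} hexagonal face does not produce a contradiction. If $v_1$ has color~$1$, a direct check shows that the only admissible colorings of a $6$-cycle $v_1\ldots v_6$ are $1,2,1,2,1,2$ and $1,2,2,1,2,2$ (up to rotation), and both are locally compatible with $\mathbf m=\begin{bmatrix}0&3\\2&1\end{bmatrix}$. The obstruction therefore cannot be read off one face; you must propagate the forced pattern through several adjacent hexagons (equivalently, through the neighbours $w_i$ outside the initial face) before the constraints collide. Similarly, your claim that $(1,2)$ is rigid because ``fixing one vertex propagates a unique extension with no surviving binary branching'' is exactly the kind of assertion that needs an explicit propagation argument; the two obstacles you yourself flag at the end are real and carry the actual content of the proof.
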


Similarly to the case of the rectangular grid, there is a general statement about the sufficient condition of the periodicity of a perfect coloring. 
\begin{thm}(\cite[Theorem 3]{puzyninaPeriodicityGeneralizedTwodimensional2009}, \cite[Proposition 5]{puzyninaPeriodicityPerfectColorings2011}) \label{t:hexagonal-det}
Let $ n \in \mathbb{N} $ and $\mathbf{m} \in \mathbb{N}_{0}^{n \times n}$ be such that $\sum_{j=1}^n m_{ij}=3$ holds for all $i = 1,2, \ldots, n$. There exists an aperiodic $\mathbf{m}$-perfect coloring $\Gamma$ of a hexagonal grid if and only if $\det(\mathbf{m}^2-I)= 0$, in which $I$ is an identity matrix.
\end{thm}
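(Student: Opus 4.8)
The plan is to reduce the two-dimensional coloring problem to a one-dimensional propagation analysis, mirroring the strategy behind Theorems~\ref{t:rectangular-det} and~\ref{t:triangular-det}, and to follow the method of $R$-prolongable words. First I would fix the brick-wall realization of the hexagonal grid from Remark~\ref{r:hex}: place the vertices at $\mathbb{Z}^2$, join $(x,y)$ to $(x\pm1,y)$ and to exactly one vertical neighbour — to $(x,y+1)$ when $x+y$ is even and to $(x,y-1)$ when $x+y$ is odd. This exhibits $G_3$ as a stack of horizontal rows $R_y=\{(x,y):x\in\mathbb{Z}\}$, each row being a path, with every vertex carrying precisely one cross-row edge that alternates up/down along the row and respects the bipartition $V=V_A\sqcup V_B$.

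Then I would encode a coloring by the row words $w_y\in C^{\mathbb{Z}}$, $w_y(x)=\Gamma(x,y)$, and translate $\mathbf{m}$-perfectness into a local rule. The key observation is that at a vertex $(x,y)$ its two horizontal neighbours $w_y(x\pm1)$ together with the single cross-row neighbour must form the multiset $m_{w_y(x)\cdot}$; since this multiset has exactly three entries, knowing the two horizontal colours forces the cross-row colour. Consequently one row determines half of each adjacent row (the down-connected vertices), while the complementary half is pinned only up to the left/right order of a two-element multiset. Because the cross-row edges alternate between the two sublattices, recovering a full row requires two propagation steps, and this is precisely where $\mathbf{m}^2$ — rather than $\mathbf{m}$ — enters: I would linearise the propagation by tracking, sublattice by sublattice, the colour data along lines and show that the admissible two-step transfer is governed by $\mathbf{m}^2$, whose fixed directions correspond to eigenvalue $\pm1$ of $\mathbf{m}$ (the sign ambiguity being the usual bipartite $\pm$ symmetry, consistent with the sign-symmetry forced by Lemma~\ref{l:symmetry}).

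For the implication ``$\det(\mathbf{m}^2-I)=0\Rightarrow$ aperiodic coloring exists'' I would take a nonzero vector in the kernel of $\mathbf{m}^2-I$ and use it to build a staggering degree of freedom between consecutive strips: the unit eigenvalue means a colour pattern can be shifted from one strip to the next without violating any neighbourhood constraint, so an arbitrary (in particular not eventually periodic, e.g.\ Sturmian) binary choice of staggers yields an $\mathbf{m}$-perfect coloring that is invariant under no lattice translation. For the converse ``aperiodic $\Rightarrow\det(\mathbf{m}^2-I)=0$'' I would argue by contraposition: if $\pm1\notin\mathrm{spec}(\mathbf{m})$, then $\mathbf{m}^2-I$ is invertible, the ambiguity in the propagation is rigid enough that a pigeonhole/forcing argument along two independent directions imposes a translation symmetry in each, hence double periodicity. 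This is exactly the place where the combinatorics-on-words machinery ($R$-prolongable words, respectively the algebraic argument of the cited references) is needed to convert ``the transfer has no unit direction'' into genuine two-dimensional periodicity.

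The main obstacle, as in the square and triangular cases, is not the linear algebra but controlling the combinatorial ambiguity: the perfect-coloring rule fixes the cross-row neighbour but leaves the left/right placement of the undetermined half-row free, so one must show that this freedom is either genuinely unconstrained (yielding aperiodicity when $\pm1\in\mathrm{spec}(\mathbf{m})$) or completely rigidified by the global consistency of the honeycomb (yielding periodicity otherwise). Making the two-step, sublattice-aware linearisation precise — and proving that aperiodicity of the configuration, not merely non-uniqueness of a local extension, is equivalent to the spectral degeneracy $\det(\mathbf{m}^2-I)=0$ — is the technical heart and the reason the full proof is deferred to~\cite{puzyninaPeriodicityGeneralizedTwodimensional2009, puzyninaPeriodicityPerfectColorings2011}.
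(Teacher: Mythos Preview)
The paper does not prove Theorem~\ref{t:hexagonal-det}; it is quoted verbatim from Puzynina's work and only the remark ``The proof of Theorem~\ref{t:hexagonal-det} does not however provide a direct information about the cardinality of the sets of $\mathbf{m}$-perfect colorings'' is added. So there is nothing in the paper to compare your attempt against beyond the one-line hint (attached to the analogous Theorem~\ref{t:rectangular-det}) that the argument ``relies on the method of $R$-prolongable words''.

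Your sketch is consistent with that hint and captures the correct heuristics: the brick-wall model, the row-by-row propagation, the fact that the bipartite structure forces a two-step transfer and hence the appearance of $\mathbf{m}^2$ rather than $\mathbf{m}$, and the dichotomy ``kernel of $\mathbf{m}^2-I$ nontrivial $\Leftrightarrow$ a free staggering degree of freedom between strips''. What you have written, however, is a plan and not a proof. The two places where real work is hidden are exactly the ones you flag yourself: (i) turning ``$\pm1\notin\mathrm{spec}(\mathbf{m})$'' into actual double periodicity requires the full $R$-prolongable-words machinery (or the algebraic substitute), and a pigeonhole allusion is not enough; (ii) for the converse direction, exhibiting a nonzero vector in $\ker(\mathbf{m}^2-I)$ does not by itself produce a valid $\mathbf{m}$-perfect coloring --- one must first know that \emph{some} $\mathbf{m}$-perfect coloring exists and then show the eigenvector can be used to deform it aperiodically, which is again nontrivial on the honeycomb. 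Since you end by deferring precisely these steps to~\cite{puzyninaPeriodicityGeneralizedTwodimensional2009, puzyninaPeriodicityPerfectColorings2011}, your write-up is best read as an informed summary of those references rather than an independent proof.
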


The proof of Theorem~\ref{t:hexagonal-det} 
does not however provide a direct information about the cardinality of the sets of $\mathbf{m}$-perfect colorings. Nevertheless, the condition $\det (\mathbf{m}^2 - I)=0$ is satisfied for
\[
\mathbf{m}_1 = \left[ 
\begin{array}{cc}  
0 & 3 \\
1 & 2
\end{array} 
\right], \quad  
\mathbf{m}_2 = \left[ 
\begin{array}{cc}  
1 & 2 \\
2 & 1
\end{array} 
\right], \quad 
\mathbf{m}_3 = \left[ 
\begin{array}{cc}  
2 & 1 \\
1 & 2
\end{array} 
\right] , \quad 
\mathbf{m}_4 = \left[ 
\begin{array}{cc}  
2 & 1 \\
3 & 0
\end{array} 
\right],
\]
which are all cases in which there is an uncountable number of $\mathbf{m}$-perfect colorings according to the results~\cite{avgustinovichCompletelyRegularCodes2016}.

\section{Application}\label{s:application}
We conclude this paper with a final demonstration of perfect stationary solutions of~\eqref{e:GDE}. As a starting point let us consider periodic solutions on a square grid corresponding to the colorings from Example~\ref{ex:periodic-is-perfect}. According to Theorem~\ref{t:periodic-is-perfect}, each of the colorings $\widetilde\Gamma_i$ is periodic with the vectors $\mathbf{v}_1 = (2,0)$ and $\mathbf{v}_2 = (0,2)$ (see Figure~\ref{f:ex:perfect-is-periodic-per}).

Let us consider a lattice differential equation~\eqref{e:GDE} on the square lattice/grid with bistable cubic nonlinear term
\[
f(s;a) = s(1-s)(s-a), \qquad a \in (0,1).
\]

While investigating the periodic solutions on a square grid with $\mathbf{v}_1$ and $\mathbf{v}_2$ of~\eqref{e:GDE}, it is actually sufficient to focus on the system
\begin{equation}\label{e:applications-stationary-problem}
0 = d(\mathbf{m} \, \cdot \, v - 4 v) + F(v),
\end{equation}
see~\eqref{e:finite-stationary} in Theorem~\ref{t:main} and Theorem~\ref{t:periodic-is-perfect-sol}, with the matrix 
\[
  \mathbf{m} =
   \begin{bmatrix}
       0 & 2 & 2 & 0 \\
       2 & 0 & 0 & 2 \\
       2 & 0 & 0 & 2 \\ 
       0 & 2 & 2 & 0
   \end{bmatrix}, 
\]
i.e., the coloring in Figure~\ref{f:ex:isomorphic-is-periodic-isom} in Example~\ref{ex:periodic-is-perfect}. At the same time the colorings $\widetilde\Gamma_1, \widetilde\Gamma_4, \widetilde\Gamma_5$ are perfect. It is suitable to particularly mention another perfect coloring, the homogeneous coloring which we denote by $\widetilde{\Gamma}_6$. The colorings have matrices 
\[
\mathbf{m}_1 = 
\begin{bmatrix}
    0 & 2 & 2 \\
    4 & 0 & 0 \\
    4 & 0 & 0
\end{bmatrix},
\quad
\mathbf{m}_4 =
\begin{bmatrix}
    2 & 2 \\
    2 & 2
\end{bmatrix},
\quad
\mathbf{m}_5 = 
\begin{bmatrix}
    0 & 4 \\
    4 & 0
\end{bmatrix}, 
\quad
\mathbf{m}_6 = 
\begin{bmatrix}
    4 
\end{bmatrix}.
\]
see Lemma~\ref{l:perfect-merger}.
Although the colorings $\widetilde\Gamma_2$, $\widetilde\Gamma_3$ use finite number of colors and are periodic, they are not perfect. The crucial observation here is that if we pose a restriction to the equation~\eqref{e:applications-stationary-problem} such that
\begin{itemize}
    \item $v_1 = v_4$, then~\eqref{e:applications-stationary-problem} becomes $0 = d(\mathbf{m}_1 \, \cdot \, v - 4 v) + F(v)$ where $v \in \mathbb{R}^3$,
    \item $v_1 = v_2$, $v_3 = v_4$, then~\eqref{e:applications-stationary-problem} becomes $0 = d(\mathbf{m}_4 \, \cdot \, v - 4 v) + F(v)$ where $v \in \mathbb{R}^2$,
    \item $v_1 = v_4$, $v_2 = v_3$, then~\eqref{e:applications-stationary-problem} becomes $0 = d(\mathbf{m}_5 \, \cdot \, v - 4 v) + F(v)$, where $v \in \mathbb{R}^2$,
    \item $v_1 = v_2 = v_3 = v_4$, then~\eqref{e:applications-stationary-problem} becomes $0 = d(\mathbf{m}_6 \, \cdot \, v - 4 v) + F(v) = F(v)$, where $v \in \mathbb{R}$,
\end{itemize}
which follow immediately from Lemma~\ref{l:ss-merger}.
In other words, if there is a solution $ v $ of the corresponding
system with the respective matrices $\mathbf{m}_1, \mathbf{m}_4, \mathbf{m}_5$, $ \mathbf{m}_6 $ then the vector $(v_1, v_2, v_3, v_1)$, $(v_1,v_1,v_2,v_2)$, $(v_1, v_2, v_2, v_1)$, or $(v_1, v_1, v_1, v_1)$ is a solution to~\eqref{e:applications-stationary-problem}, respectively. 

For example, setting $a=0.4$ and $d=0.005$, there are $3^4 = 81$ solutions of the problem~\eqref{e:applications-stationary-problem}. Let us take a look at the subsets of solutions. First, there are $3$ homogeneous stationary solutions which are represented by a monochromatic coloring. Next, the coloring $\widetilde\Gamma_5$ represents $3^2 = 9$ solutions out of which $3$ are homogeneous. There are thus in total $9-3 = 6$ solutions of ~\eqref{e:applications-stationary-problem} which can be represented by $\widetilde\Gamma_5$ and by no merger of it.
There are two possible layouts of the coloring $\widetilde\Gamma_4$ which are just rotated by the angle $\tfrac{\pi}{2}$. Each layout represents $2^3=9$ solutions. Nevertheless, the three homogeneous stationary solutions are also represented by $\widetilde\Gamma_4$ and its rotation. This leads to the total of $2 \cdot (9-3) = 12$ solutions represented by $\widetilde\Gamma_1$ and no merger of it. Finally, there are two possible layouts of the coloring $\widetilde\Gamma_4$ which are just rotated by the angle $\tfrac{\pi}{2}$. Each layout represents $3^3 = 27$ solutions. The homogeneous coloring and the coloring $\widetilde\Gamma_5$ are mergers of $\widetilde\Gamma_1$. We have thus $2 \, \cdot \, (27-9) = 36$ colorings represented by $\widetilde\Gamma_1$ and no merger of it. 

There is however one exciting observation. The matrices $\mathbf{m}, \mathbf{m}_1$ and $\mathbf{m}_4$ are singular with zero determinant. According to Theorem~\ref{t:rectangular-det}, there exist $
\mathbf{m}$-, $\mathbf{m}_1$-, and $\mathbf{m}_4$-perfect colorings of the square grid which are not periodic, see Figure~\ref{f:applications} for various examples of non-periodic $\mathbf{m}_1$- and $\mathbf{m}_4$-perfect colorings.
   \begin{figure}
       \centering
       \begin{subfigure}[b]{.48\textwidth}
          \includegraphics[width = \textwidth]{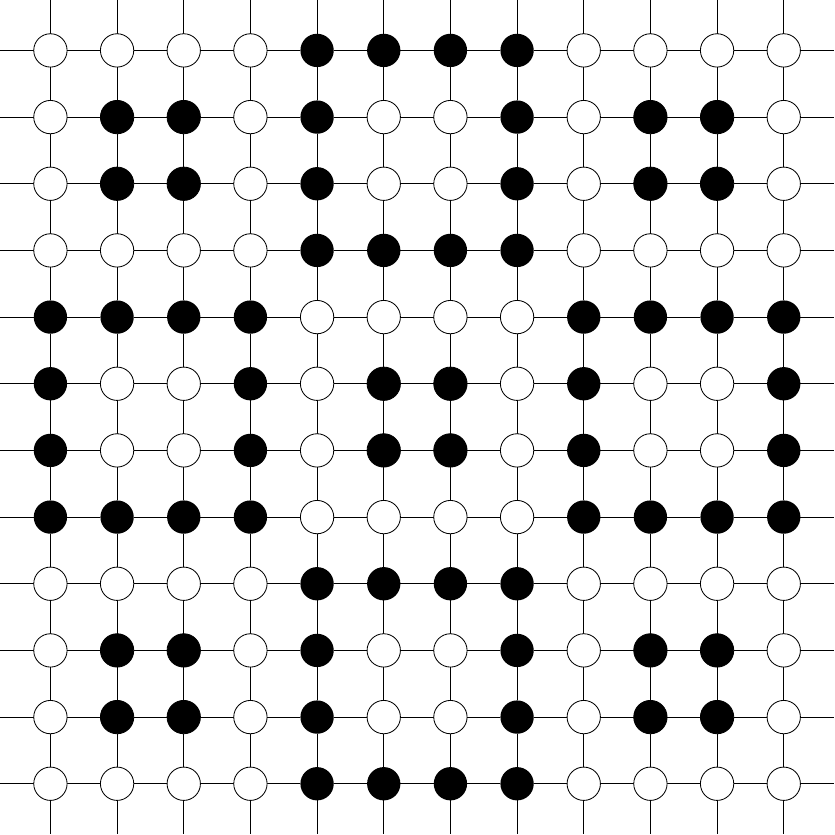}
          \caption{}
          \label{f:patt-square-grid-22-ver-1}
       \end{subfigure}       
       \hspace{5pt}
       \begin{subfigure}[b]{.48\textwidth}
          \includegraphics[width = \textwidth]{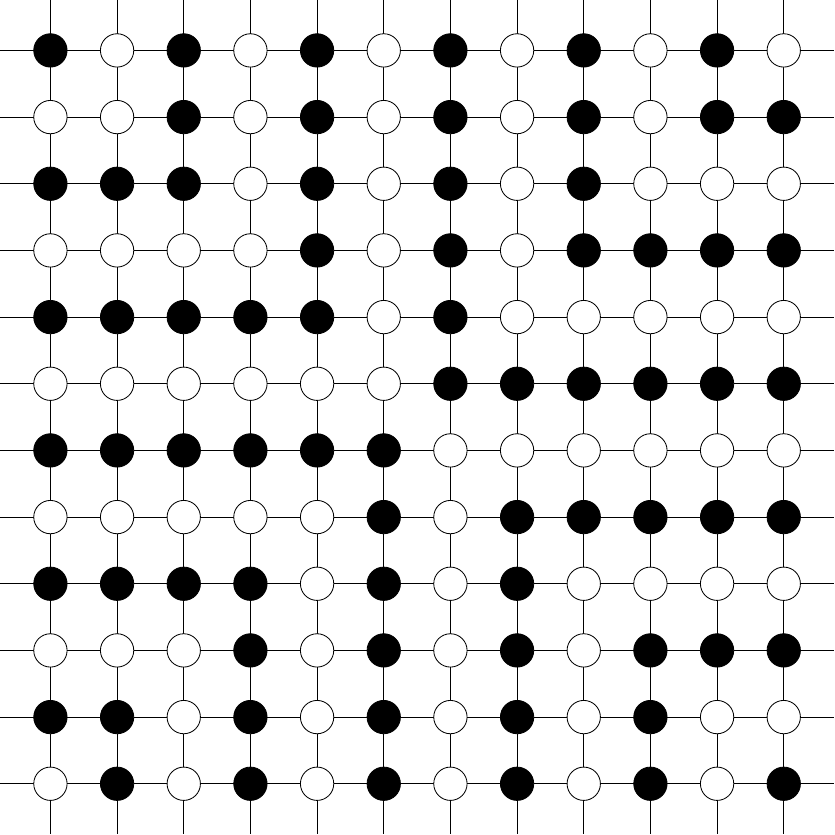}
          \caption{}
          \label{f:patt-square-grid-22-ver-2}
       \end{subfigure}

       \begin{subfigure}[b]{.48\textwidth}
          \includegraphics[width = \textwidth]{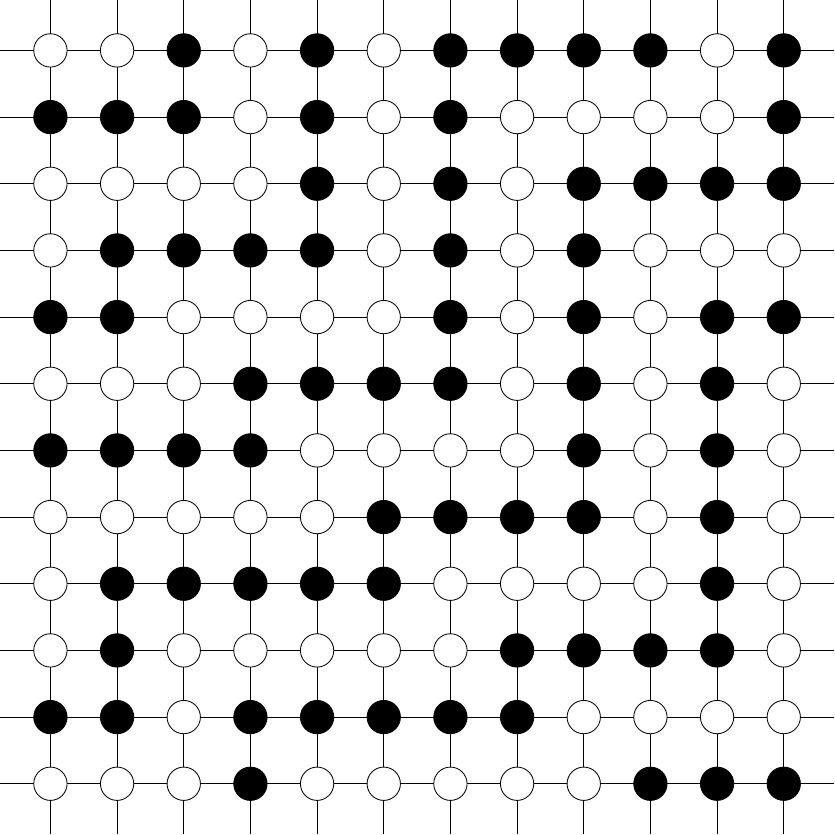}
          \caption{}
          \label{f:patt-square-grid-22-ver-4}
       \end{subfigure}
       \hspace{5pt}
       \begin{subfigure}[b]{.48\textwidth}
          \includegraphics[width = \textwidth]{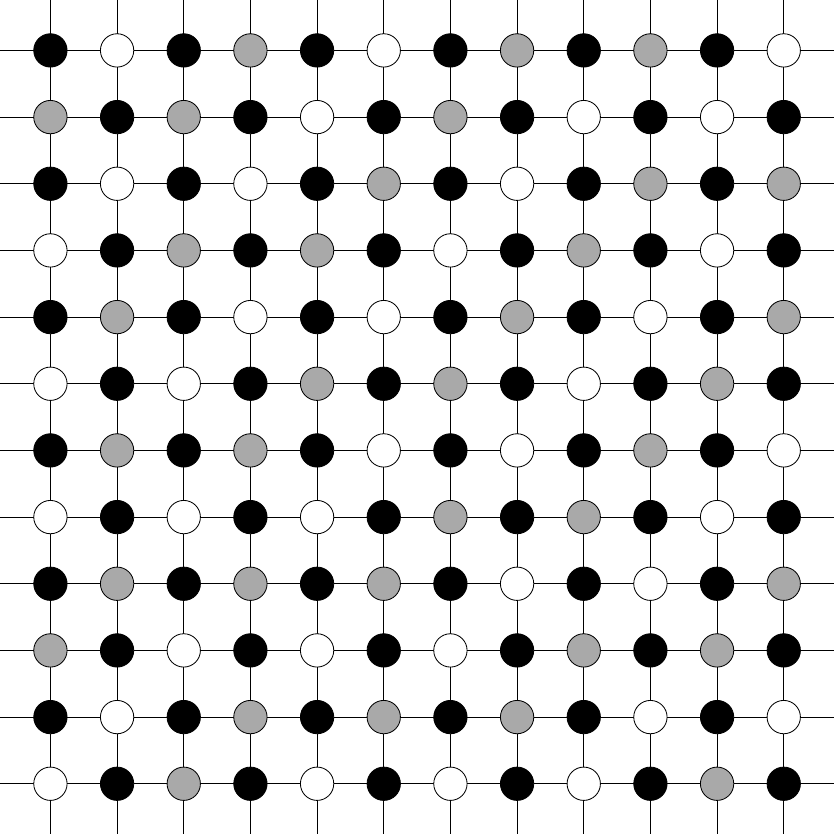}
          \caption{}
          \label{f:patt-square-grid-3col-ver-1}
       \end{subfigure}
       \caption{Examples of possible layouts for stationary solutions described by matrices $\mathbf{m}_1, \mathbf{m}_4$ in Section~\ref{s:application}. The layout~\ref{f:patt-square-grid-22-ver-1} is periodic but the period can be arbitrarily inflated by making the tiles bigger. The layout~\ref{f:patt-square-grid-22-ver-2} is not periodic in the sense of Definition~\ref{d:square} although there is some regularity present. The layout~\ref{f:patt-square-grid-22-ver-4} is also aperiodic and underlines the strategy of constructing uncountably many perfect colorings -- hence, stationary solutions -- for $\mathbf{m}_1$ by encoding doubly-infinite sequence of zeros and ones into right- and up turns of the monochromatic paths, see Theorem~\ref{t:square:grid}. The layout in~\ref{f:patt-square-grid-3col-ver-1} uses similar idea as~\ref{f:patt-square-grid-22-ver-4} but with three colors. }
       \label{f:applications}
   \end{figure}

\section{Conclusion}
We explored a specific class of stationary solutions of the reaction-diffusion equations on regular graphs~\eqref{e:GDE}, the perfect stationary solutions. We aimed to introduce the notion, examine its elementary properties, and connect them to existing concepts as periodic stationary solutions on regular grids. 

There are various directions in which the topic of perfect stationary solutions and the finite range solutions of reaction-diffusion equations can be extended. We include some open questions which originate from the direction of dynamical systems and algebraic equations.
\begin{itemize}
    \item How to characterize and how prevalent are the finite range stationary solutions of reaction-diffusion equations which are not perfect, cf Example~\ref{ex:tree}?
    \item Assume that the function $F$ in~\eqref{e:GDE} is element-wise polynomial. Can the idea of reducing the stationary problems via mergers of perfect colorings, Lemma~\ref{l:ss-merger} be used to decompose the stationary problem into smaller ones utilizing algebraic techniques such as Gr\"{o}bner bases? 
    \item Rigorously examine an equivalent statement to Theorem~\ref{t:periodic-is-perfect-sol} in higher-dimensional regular grids. This should mainly consist of proper definition or regular grids in higher dimensions and generalizing Theorem~\ref{t:periodic-is-perfect}.
\end{itemize}
Naturally, there is plethora of open problems regarding perfect colorings of graphs.
\begin{itemize}
    \item How to characterize aperiodic perfect colorings with regular structure such as the one in Figure~\ref{f:patt-square-grid-22-ver-2}?
    \item Given a matrix $\mathbf{m}$ and a graph $G$, how to fully characterize all $\mathbf{m}$-perfect colorings of the graph $G$?
    \item Existence and the number of colorings of higher-dimensional grids. 
\end{itemize}
\section{Acknowledgement}
The authors acknowledge the support of the project GA22-18261S by the Czech Science Foundation.
Both authors are thankful to Petr Stehl\'{i}k for numerous discussions about the topic and his insightful remarks to the manuscript.

\end{document}